\newtheorem{assumption}{Assumption}
\newtheorem{remark}{Remark}
\newtheorem{definition}{Definition}
\newtheorem{lemma}{Lemma}
\newtheorem{theorem}{Theorem}
\newtheorem*{lemma*}{Lemma}
\newcommand{\xkh}{x_k^h}
\newcommand{\skh}{s_k^h}
\begin{document}
\title{On high-order multilevel optimization strategies\thanks{This work was funded by TOTAL.}}

\renewcommand{\thefootnote}{\fnsymbol{footnote}}
\footnotetext[1]{INPT-IRIT, University of Toulouse and ENSEEIHT, 2 Rue Camichel, BP~7122, F-31071 Toulouse Cedex~7, France (serge.gratton@enseeiht.fr, elisa.riccietti@enseeiht.fr)}
\author{Henri Calandra,\footnote{TOTAL, Centre Scientifique et Technique Jean F\'eger, avenue de Larribau F-64000 Pau, France   (henri.calandra@total.com)} Serge Gratton\footnotemark[1], Elisa Riccietti\footnotemark[1], Xavier Vasseur\footnote{ISAE-SUPAERO, University of Toulouse, 10, avenue Edouard Belin, BP~ 54032, F-31055 Toulouse Cedex 4, France 	(xavier.vasseur@isae-supaero.fr).}}

\maketitle

\begin{abstract} 
  We propose a new family of multilevel methods for unconstrained minimization. The resulting strategies are multilevel extensions of high-order optimization methods based on $q$-order Taylor models (with $q\geq 1$) that have been recently proposed in the literature. The use of high-order models, while decreasing the worst-case complexity bound, makes these methods computationally more expensive. Hence, to counteract this effect, we propose a multilevel strategy that exploits a hierarchy of problems of decreasing dimension, still approximating the original one, to reduce the global cost of the step computation. A theoretical analysis of the family of methods is proposed. Specifically, local and global convergence results are proved and a complexity bound to reach first order stationary points is also derived.  A multilevel version of the well known adaptive method based on cubic regularization (ARC, corresponding to $q=2$ in our setting) has been implemented. Numerical experiments clearly highlight the relevance of the new multilevel approach leading to considerable computational savings in terms of floating point operations compared to the classical one-level strategy. 
  \end{abstract}

\section{Introduction}
We propose a new family of high-order multilevel optimization methods for unconstrained minimization. Exploiting ideas stemming from multilevel methods allows us to reduce the cost of the step computation, which represents the major cost per iteration of the standard  single level procedures. We have been mainly inspired by two driving ideas: the use of high-order models in optimization as introduced in \cite{Birgin2017}, and the multilevel recursive strategy proposed in \cite{rmtr}. 

When solving unconstrained minimization problems, quadratic models are widely used. These are usually regularized by a quadratic term. For example, trust-region methods have been widely studied and used to globalize Newton-like iterations 
\cite{cgt,nw}. Lately in the literature, a different option has received a growing attention: the use of a cubic overestimator of the objective function
as a regularization technique for the computation of the step from one iterate to the next, giving rise to quadratic models with cubic regularization. This idea first appeared in \cite{griewank} and then was reconsidered in \cite{nesterov}, where the authors proved that the method has a better worst-case complexity bound compared to standard trust-region methods. Later, in \cite{arc,arc2}, an adaptive variant of the method has been proposed, based on a dynamical choice of the regularization parameters and on an approximate solution of the subproblems. The resulting method is known as adaptive method based on cubic regularization (ARC) and is shown to preserve the attractive global complexity bound established in \cite{nesterov}. 
In recent years the method has attracted further interest, see for example \cite{cartis2010complexity,toint2013nonlinear,yuan2015recent}. 

In recent publications also methods of higher order start to gain interest, see for example \cite{Birgin2017,wang2018tensor}. In \cite{Birgin2017} in particular, it has been observed that the good complexity bound of ARC can be made even lower, if one is willing to use higher-order derivatives.  In specific applications this computation is indeed feasible, for example when considering partially separable functions \cite{separable}.  
The authors in \cite{Birgin2017} present a family of methods that generalizes ARC, and that uses high-order regularized models. Specifically, they are based on models of order $q\geq 1$, regularized by a term of order $q+1$. ARC belongs to this family and corresponds to the choice $q=2$.  The authors in \cite{Birgin2017} propose a unifying framework to describe the theoretical properties of the methods in this class. It is proved that the method based on the $q$-th order model requires at most $O\left(\epsilon^{-\frac{q+1}{q}}\right)$ function evaluations to find a first-order critical point, where $\epsilon$ denotes the absolute accuracy level.

However, the use of higher-models come along with higher computational costs. The main cost per iteration of the methods described in \cite{Birgin2017} is represented by the step computation through the model minimization. This cost is proportional to the dimension of the problem, it can therefore be significant for large-scale problems. For second-order models this issue has been faced for example in \cite{rmtr}, where the authors exploit ideas coming from multigrid \cite{book_mg} to reduce the cost of the minimization.
Indeed, the idea of making use of more grids to solve a large-scale problem has been extended also to optimization, see for example \cite{rmtr,mg1,mg2,mg3,mg4,mg5,mg6}. These methods share with classical multigrid methods the idea of exploiting a hierarchy of problems (in this case a sequence of nonlinear functions) defined on lower dimensional spaces, approximating the original objective function $f$. The simplified expressions of the objective function are used to build models that are cheaper to minimize, and are used to define the step. 
Specifically, in \cite{rmtr}, the authors present an extension of classical multigrid methods for nonlinear optimization problems, \cite{bran:77,brli:11} or \cite[Ch. 3]{briggs}, to a class of multilevel trust-region based optimization algorithms.

\paragraph{Our contributions}
\vspace{10pt}
Inspired by the ideas presented in \cite{Birgin2017,rmtr}, we propose a family of multilevel optimization methods using high-order regularized models that generalizes the methods proposed in both papers. The aim is to decrease the computational cost of the methods in \cite{Birgin2017}, extending the ideas in \cite{rmtr} to higher-order models. We also develop a theoretical analysis for the resulting family of methods. The main theoretical results are provided in Theorems \ref{teo_glob}, \ref{teo_compl} and \ref{teo_loc}, respectively. In these theorems we successively prove the global convergence property of the methods,  then evaluate a worst-case complexity bound to reach a first-order critical point and finally provide local convergence rates. The global convergence analysis generalizes the results in \cite{Birgin2017} and appears as much simpler than that in \cite{rmtr}. Moreover we establish local convergence results  towards second-order stationary points, that are not present neither in \cite{Birgin2017} nor in \cite{rmtr}.  These results not only generalize those in \cite{yue2018quadratic}, that are valid only for $q=2$, but also apply to the one level methods in \cite{Birgin2017}. From a practical point of view, we implemented the method of the family corresponding to $q=2$. This represents a multilevel version of ARC method.

 To the best of our knowledge, this is the first time that multilevel optimization strategies, based on models of generic order $q\geq 1$, are proposed, and that a unifying framework is introduced to study their convergence. Moreover, multilevel versions of ARC have never been analysed or tested numerically before.  

The manuscript is organized as follows.  In Section \ref{sec_method}, we briefly introduce the family of optimization methods using high-order regularized models considered in \cite{Birgin2017}. Section \ref{sec_multilevel} and Section \ref{sec_conv} represent our main contribution. We introduce in Section \ref{sec_multilevel} the multilevel extensions of the methods presented in Section \ref{sec_method}, and we provide  a theoretical analysis in Section \ref{sec_conv}. Specifically, we focus on global convergence in Section   \ref{sec_glob}, worst-case complexity in Section \ref{sec_compl} and local convergence in Section \ref{sec_loc}.  In Section \ref{sec_numerical_results} we then present results related to numerical experiments performed with the multilevel method corresponding to $q=2$. 
Finally, conclusions are drawn in Section \ref{sec_conclusion}.

\section{High-order iterative optimization methods}\label{sec_method}
Let $q\geq 1$ be an integer. Let us consider a minimization problem of the form: 
\begin{equation}\label{nls}
\min_{x \in  \mathbb{R}^n} f(x)
\end{equation}
with $f:\mathbb{R}^n\rightarrow\mathbb{R}$ a bounded below and $q$-times continuously differentiable function, called the objective function. 

Classical iterative optimization methods for unconstrained minimization are based on the use of a model to approximate the objective function at each iteration. In this section, we describe the iterative optimization methods using high-order models presented in \cite{Birgin2017}.

\subsection{Model definition and step acceptance}
At each iteration $k$, given the current iterate $x_k$, the objective function is approximated by the Taylor series $T_{q,k}$ of $f(x_k+s)$ (with $s \in \mathbb{R}^n$) truncated at order $q$. The Taylor model of order $q$ denoted as $m_{q,k}$ is then defined as:
\begin{equation}\label{taylor_model}
m_{q,k}(x_k,s)=T_{q,k}(x_k,s).
\end{equation} 
 

 A step $s_k$ is then found minimizing (possibly approximately) the regularized model
	\begin{equation}\label{reg_taylor_model}
	T_{q,k}(x_k,s)+\frac{\lambda_k}{q+1}\|s\|^{q+1},
	\end{equation}
	where $\lambda_k$ is a positive value called regularization parameter. The step $s_k$ is used to define a trial point i.e. $x_{k+1}=x_k+s_k$. At each iteration, it has to be decided whether to accept the step or not. This decision is based on the accordance between the decrease in the function and in the model. More precisely, at each iteration both the decrease achieved in the model, that we call \textit{predicted reduction},  $pred=m_{q,k}(x_k)-m_{q,k}(x_k,s_k)$, and that achieved in the objective function, that we call \textit{actual reduction},  $ared=f(x_k)-f(x_k+s_k)$, are computed. The step acceptance is then based on the ratio:
\begin{equation}\label{rho}
\rho_k=\frac{ared}{pred}=\frac{f(x_k)-f(x_{k}+s_k)}{m_{q,k}(x_k)-m_{q,k}(x_k,s_k)}.
\end{equation} 
If the model is accurate, $\rho_k$ will be close to one. Then, the step $s_k$ is accepted if $\rho_k$ is larger than or equal to a chosen threshold $\eta_1\in(0,1)$ and is rejected otherwise. In the first case, the step is said to be \textit{successful}, and otherwise the step is \textit{unsuccessful}.

After the step acceptance, the regularization parameter is updated for the next iteration. The update is still based on the ratio \eqref{rho}. If the step is successful, the regularization parameter is decreased, otherwise it is increased. The whole procedure is stopped when a minimizer of $f$ is reached. Usually, the stopping criterion is based on the norm of the gradient, i.e. given an absolute accuracy level $\epsilon>0$ the iterations are stopped as soon as $\|\nabla_x f(x_k)\|<\epsilon$. The whole procedure is sketched in Algorithm \ref{algo0}.

\begin{algorithm}
\caption{AR$\mathbf{q}(x_0, \lambda_0, \epsilon)$ (Adaptive Regularization method of order $q$)}
\label{algo0}
\begin{algorithmic}[1]
	\STATE{Given $0<\eta_1\leq\eta_2<1$,  $0<\gamma_2\leq\gamma_1< 1<\gamma_3$, $\lambda_{\min}>0$.}
\STATE{{\bf Input:} $x_0 \in \mathbb{R}^n$, $\lambda_0 > \lambda_{\min} $, $\epsilon > 0$.}
\STATE{$k = 0$}
\WHILE{$\|\nabla_x f(x_k)\|> \epsilon$}
\STATE{$\bullet$ {\bf Initialization:} Define the model $m_{q,k}$ as in \eqref{taylor_model}.}
\STATE{$\bullet$ {\bf Model minimization:} Find a step $s_k$ that sufficiently reduces the model. }
\STATE{$\bullet$ {\bf Acceptance of the trial point:}\label{step_acceptance0} Compute $\rho_k=\displaystyle \frac{f(x_k)-f(x_k+s_k)}{m_{q,k}(x_k)-m_{q,k}(x_k,s_k)}$.
\IF{$\rho_k\geq \eta_1$} \STATE{$x_{k+1}=x_k+s_k$} \ELSE \STATE{$x_{k+1}=x_k$.}\ENDIF}
\STATE{$\bullet$ {\bf Regularization parameter update:} }
\IF{$\rho_k\geq \eta_1$} \STATE{
		$$\lambda_{k+1} =
		\bigg \{
		\begin{array}{ll}
		\max\{\lambda_{\min},\gamma_2\lambda_k\},  & \text{ if }\rho_k\geq \eta_2, \\
		\max\{\lambda_{\min},\gamma_1\lambda_k\},  & \text{ if } \rho_k< \eta_2,\\
		\end{array}
		$$}\ELSE \STATE{ 
		$\lambda_{k+1}= \gamma_3\lambda_k$.}\ENDIF
\STATE{$k = k + 1$}
\ENDWHILE
\end{algorithmic}
\end{algorithm}

\subsection{Minimization of the model}
The main computational work per iteration in this kind of methods is represented by the minimization of the regularized model \eqref{reg_taylor_model}. This is the most expensive task, and the cost naturally depends on the dimension of the problem. However, from the convergence theory of such methods, it is well known that it is not necessary to minimize the model exactly to get a globally convergent method. 

A well-known possibility is to minimize the model until the Cauchy decrease is achieved, i.e. until a fraction of the decrease provided by the Cauchy step  (the step that minimizes the model in the direction of the negative gradient) is obtained. 
In \cite{Birgin2017} the authors consider a different stopping criterion for the inner iterations, the one originally proposed in \cite{arc,arc2}, which has the advantage of allowing for simpler convergence proofs. The inner iterations are stopped as soon as the norm of the gradient of the regularized model becomes lower or equal than a multiple of the power $q$ of the norm of the step $s_k$:
\begin{equation}\label{stopping_inner}
\|\nabla_s m_{q,k}(x_k,s_k)+\lambda_k\|s_k\|^{q-1} s_k\|\leq \theta \|s_k\|^q,
\end{equation}
for a chosen constant $\theta>0$.

For very large-scale problems however, even an approximate minimization of  \eqref{reg_taylor_model} may be really costly. Then, in the next section we propose  multilevel variants of the procedures, that rely on simplified models of the objective function, cheaper to optimize, allowing to reduce the global cost of the optimization procedure.

\section{Multilevel optimization methods}\label{sec_multilevel}

We describe the multilevel extension of the family of methods presented in Section \ref{sec_method}. The procedures are inspired by the multilevel trust-region approach presented in \cite{rmtr}, where only second-order models with quadratic regularization have been considered. Here, we generalize this approach by allowing also higher-order models, i.e. $q>2$.

\subsection{Preliminaries and notations}
In standard optimization methods the minimization of \eqref{reg_taylor_model} represents the major cost per iteration, which crucially depends on the dimension $n$ of the problem.
When $n$ is large, the solution cost is therefore often significant.
We want to reduce this cost by exploiting
the knowledge of alternative simplified expressions of the objective function. More specifically, we assume that we know a collection of functions $\{f_l\}_{l=1}^{l_{\max}}$ 
such that each $f_l$ is a $q$-times continuously differentiable function from $\mathbb{R}^{n_l}\rightarrow\mathbb{R}$  and  $f^{l_{\max}}(x)=f(x)$ for all $x\in\mathbb{R}^n$. We will also assume that, for each $l = 2,\dots,{l_{\max}}$, $f_l$ is more
costly to minimize than $f_{l-1}$. This is the typical scenario when the problem arises from the discretization of an infinite dimensional problem and $f_l$ represent increasingly finer discretizations. In this case,
$n_l\geq n_{l-1}$ for all $l$, but of course this is not the only possible application. As we do not assume the hierarchy to come from a discretization process, we do not use the terminology typically used in the field of multigrid methods. We will then use 'levels' rather than 'grids'. 

The methods we  propose are recursive procedures, so it suffices to describe the two-level case. Then, for sake of simplicity, from now on, we will assume that we have just two approximations to our objective $f$ at disposal. This amounts to consider $l_{\max}=2$.

For ease of notation, we will denote by $f^h:\mathcal{D}^h\subseteq\mathbb{R}^{n_h}\rightarrow\mathbb{R}$ the approximation at the highest level ($f^h(x)=f^{l_{\max}}(x)$ in the notation previously used) and by $f^H:\mathcal{D}^H\subseteq\mathbb{R}^{n_H}\rightarrow\mathbb{R}$ the other approximation available, that is cheaper to optimize. The quantities on the highest level will be denoted by a superscript $h$, whereas the quantities on the lower level will be denoted by a superscript $H$. Let $\xkh$ denote the $k$-th iteration at the highest level.   In the following $\|\cdot\|$ will denote the Euclidean norm. We will use the same notation for all the spaces we will consider, the space on which the norm is defined will be clear by the context.

\begin{remark}
	 To deal with high order derivatives, and to properly handle the concept of coherence between lower and higher level model,  we will need to use a tensor notation, that we introduce here for convenience of the reader, see \cite{tensor}. We first consider a tensor of order three, and then extend the definition to a tensor of order $i\in\mathbb{N}$. 
		 \begin{definition}
		 	Let $T\in\mathbb{R}^{n\times n\times n}$, and $u,v,w\in\mathbb{R}^{n}$. Then $T(u,v,w)\in\mathbb{R}$, $T(u,v)\in\mathbb{R}^n$  and
		 	\begin{align*}
		 	T(u,v,w)&=\sum_{i=1}^{n}\sum_{j=1}^{n}\sum_{k=1}^{n}T(i,j,k)u(i)v(j)w(k),\\
		 	T(v,w)(i)&=\sum_{j=1}^{n}\sum_{k=1}^{n}T(i,j,k)v(j)w(k), \quad i=1,\dots,n.
		 	\end{align*}
		 	\end{definition}
		 	
		 	\begin{definition}\label{def_tensor}
		 		Let $i\in\mathbb{N}$ and $T\in\mathbb{R}^{n^i}$, and $u_1,\dots,u_i\in\mathbb{R}^{n}$. Then $T(u_1,\dots,u_i)\in\mathbb{R}$,
		 		$T(u_1,\dots,u_{i-1})\in\mathbb{R}^n$  and
		 		\begin{align*}
		 		T(u_1,\dots,u_i)&=\sum_{j_1=1}^{n}\dots\sum_{j_i=1}^{n}T(j_1,\dots,j_i)u_1(j_1)\dots u_i(j_i),\\
		 		T(u_1,\dots,u_{i-1})(j_1)&=\sum_{j_2=1}^{n}\dots\sum_{j_i=1}^{n}T(j_1,\dots,j_i)u_i(j_2),\dots u_{i-1}(j_i), \quad j_1=1,\dots,n.
		 		\end{align*}
		 	\end{definition}
		 	
		 \end{remark}

\subsection{Construction of the lower level model}
The main idea is to use $f^H$ to construct, in the neighbourhood of the current iterate, an alternative model $m_{q,k}^H$ to the Taylor model $m_{q,k}^h$ in \eqref{taylor_model} for $f^h=f$ \cite{rmtr}.
The alternative model $m_{q,k}^H$ should be cheaper to optimize than $m_{q,k}^h$, and will be used, whenever suitable, to define the
step. Of course, for $f^H$ to be useful at all in minimizing $f^h$, there should be some
relation between the variables of these two functions. We henceforth assume the following. 

\begin{assumption}\label{ass_R}
	Let us assume that there exist two full-rank linear operators $R:\mathbb{R}^{n_h}\rightarrow\mathbb{R}^{n_H}$ and $P:\mathbb{R}^{n_H}\rightarrow\mathbb{R}^{n_h}$ such that 
	$P=\alpha R^T$, for a fixed scalar $\alpha>0$. Let us assume also that it exists $\kappa_R>0$ such  that $\max\{\|R\|,\|P\|\}\leq \kappa_R$, where $\|\cdot\|$ denotes the matrix norm induced by the Euclidean norm at the fine level.
\end{assumption} 

In the following, we can assume $\alpha=1$, without loss of generality,   as the problem can be easily scaled to handle the case $\alpha\neq 1$.

At each iteration $k$ at highest level we set $x_{0,k}^H=R~\xkh$, i.e.  the initial iterate at the lower level is set as the projection of the current iterate, and we define the lower level model $m_{q,k}^H$ as a modification of the coarse function $f^H$. Given $q$, $f^H$ is modified adding $q$ correction terms, to enforce the following relation:
\begin{equation}\label{def_mathcalR}
\nabla^i_s m_{q,k}^H(x_{0,k}^H,\underbrace{s^H,\dots,s^H}_{i\,\mathrm{times}})=[\mathcal{R}(\nabla^i_x f^h(x_k^h))](\underbrace{s^H,\dots,s^H}_{i\,\mathrm{times}}), \quad i=1,\dots,q,
\end{equation}
where $\mathcal{R}(\nabla^i_x f^h(x_k^h))$ is such that for all $i=1,\dots,q$ and $s_{1}^H, \dots, s_{i}^H\in\mathbb{R}^{n_H}$
\begin{align}
	[\mathcal{R}(\nabla^i_x f^h(x_k^h))](s_{1}^H,\dots,s_{i}^H)&:=\nabla^i_x f^h(\xkh,Ps_{1}^H,\dots,Ps_{i}^H),\label{def1}\\
	\langle [\mathcal{R}(\nabla^i_x f^h(x_k^h))](s_{1}^H,\dots,s_{i-1}^H), s_{i}^H\rangle& := \langle \nabla^i_x f^h(\xkh,Ps_{1}^H,\dots,Ps_{i-1}^H),P s_{i}^H\rangle\label{def2},
\end{align}
 where $\nabla^i_x f^h, \nabla^i_s m_{q,k}^H$ denote the $i$-th order tensor of $f^h$ and $m_{q,k}^H$ respectively, $\langle\cdot,\cdot\rangle$ denotes the scalar product, and for generic $g:\mathbb{R}^n\rightarrow\mathbb{R}$ and $s_1,\dots,s_i\in\mathbb{R}^n$, $\nabla ^i g(x,s_1,\dots,s_i)$ is the same as $\nabla ^i g(x)(s_1,\dots,s_i)$, which is given in Definition \ref{def_tensor}. 

For instance, if 
$q=2$, relation \eqref{def_mathcalR} simply becomes:
\begin{equation*}
\nabla_s m_{q,k}^H(x_{0,k}^H)^Ts^H=(R~\nabla_x f^h(\xkh))^Ts^H, \quad(s^H)^T\nabla_x^2 m_{q,k}^H(x_{0,k}^H)s^H=(s^H)^TR~\nabla_x^2 f^h(\xkh)~Ps^H.
\end{equation*}
Relation \eqref{def_mathcalR} crucially ensures that  the behaviours of $f^h$ and $m_{q,k}^H$ are coherent up to order $q$ in a
neighbourhood of $\xkh$ and $x_{0,k}^H$. To achieve \eqref{def_mathcalR}, we define $m_{q,k}^H$ as
\begin{equation}\label{lower_level_model}
m_{q,k}^H(x_{0,k}^H, s^H)=f^H(x_{0,k}^H+s^H)+\sum_{i=1}^{q}\frac{1}{i!}[\mathcal{R}( \nabla^i_x f^h(\xkh)) -\nabla^i_x f^H(x_{0,k}^H)](\underbrace{s^H,\dots,s^H}_{i\,\mathrm{times}}),
\end{equation}
with $\mathcal{R}( \nabla^i_x f^h(\xkh))$ defined in \eqref{def1}-\eqref{def2}.
When $q=2$ this is simply:
\begin{align*}
m_{2,k}^H(x_{0,k}^H, s^H)=&f^H(x_{0,k}^H+s^H)+(R \nabla_x f^h(\xkh)-\nabla_x f^H(x_{0,k}^H))^Ts^H\nonumber\\
+&\frac{1}{2}(s^H)^T (R \nabla_x^2 f^h(\xkh)P-\nabla_x^2 f^H(x_{0,k}^H))s^H.
\end{align*}
\subsection{Step computation and step acceptance}
At each generic iteration $k$ of our method, a step $s^h_k$ has to be computed to define the new iterate. Then, one has the choice between the Taylor model \eqref{taylor_model} and a lower level model  \eqref{lower_level_model}.

Obviously, it is not always possible to use the lower level model. For example, it may happen that $\nabla_x f^h(\xkh)$ lies in the nullspace of $R$ and thus that $R \nabla_x f^h(\xkh)$ is zero while $ \nabla_x f^h(\xkh)$ 
is not. In this case, the current iterate appears to be first-order critical for $m_{q,k}^H$  while it is not for $f^h$. Using the model $m_{q,k}^H$ is hence potentially useful
only if $\|\nabla_s m_{q,k}^H(x_{0,k}^H)\| = \|R \nabla_x f^h(\xkh)\|$ is large enough compared to $\|\nabla_x f^h(\xkh)\|$ \cite{rmtr}. We therefore restrict
the use of the model $m_{q,k}^H$ to iterations where
\begin{equation}\label{go_down_condition}
\|R\nabla_x f^h(\xkh)\|\geq \kappa_H \|\nabla_x f^h(\xkh)\| \quad \text{ and } \quad \|R \nabla_x f^h(\xkh)\|> \epsilon_H,
\end{equation}
for some constant $\kappa_H\in(0,\min\{1,\|R\|\})$ and where $\epsilon_H\in(0,1)$  is a measure of
the first-order criticality for $m_{q,k}^H$ that is judged sufficient at level $H$ \cite{rmtr}. Note that,
given $\nabla_x f^h(\xkh)$ and $R$, this condition is easy to check before even attempting to compute
a step at a lower level.

If the Taylor model is chosen, then we just compute a step as in standard methods, minimizing (possibly approximately) the corresponding regularized model \eqref{reg_taylor_model}. If the lower level model is chosen, we then minimize the following regularized model:
\begin{equation}\label{reg_lower_model}
m_{q,k}^H(x_{0,k}^H, s^H)+\frac{\lambda_k}{q+1}\|s^H\|^{q+1}
\end{equation} 
(possibly approximately) and obtain a point $x_{*,k}^H$ such that (if the minimization is successful) the value of the regularized model has been reduced, and a step $s_k^H=x_{*,k}^H-x_{0,k}^H$ (note that the iteration indices always refer to the highest level, we are not indexing the iterations on the lower level for the minimization of the lower level model). This step has to be prolongated back on the fine level, i.e. we define $s_k^h=Ps_k^H$.

Then, $m_{q,k}^h$ will be defined as:
\begin{equation}\label{choice_model}
m_{q,k}^h(x_k^h,s_k^h)=\begin{cases} T_{q,k}^h(x_k^h,s_k^h) & \text{(Taylor model)},\\
	m_{q,k}^H(Rx_k^h,s_k^H), \;s_k^h=Ps_k^H &\text{(lower level model)}.
	\end{cases}
\end{equation} 

In both cases, after the step is found, we have to decide whether to accept it or not. The step acceptance is based on the ratio:
\begin{equation*}
\rho_k=\frac{f^h(\xkh)-f^h(\xkh+s_k^h)}{m^h_{q,k}(\xkh)-m^h_{q,k}(\xkh,\skh)},
\end{equation*}
where we remind that, from \eqref{choice_model}, the denominator is defined as:
\begin{equation}\label{pred}
m^h_{q,k}(\xkh)-m^h_{q,k}(\xkh,\skh)=	\begin{cases}T_{q,k}^h(\xkh)-T_{q,k}^h(\xkh+s_k^h),& \text{(Taylor model)},\\
	
	m_{q,k}^H(R\xkh)-m_{q,k}^H(R\xkh, s_k^H),& \text{(lower level model)}.
	\end{cases} 
\end{equation}
As in the standard form of the methods, the step is accepted if it provides a sufficient decrease in the function, i.e. if given $\eta_1>0$, $\rho_k\geq\eta_1$. The regularization parameter is also updated as in Algorithm \ref{algo0}. We sketch the whole procedure in Algorithm \ref{algo}.

\begin{algorithm}{}
	\begin{algorithmic}[1]
    \caption{MAR$\mathbf{q}(l,f^l, x_{0}^l, \lambda_{0}^l, \epsilon^l)$ (Multilevel Adaptive Regularization method of order $q$)}\label{algo} 
    \STATE{{\bf Input:}  $l \in \mathbb{N}$ (index of the current level, $1 \le l \le l_{\max}$, $l_{\max}$ being the highest level), $f^l:\mathbb{R}^{n_l}\rightarrow\mathbb{R}$ function to be optimized ($f^{l_{\max}}=f$), $x_{0}^l \in \mathbb{R}^{n_l}$, $\lambda_{0}^l > \lambda_{\min} $, $\epsilon^l > 0$.
    	}
    \STATE{Given $0<\eta_1\leq\eta_2<1$,  $0<\gamma_2\leq\gamma_1< 1<\gamma_3$, $\lambda_{\min}>0$.}
	
	\STATE{$R_l$ denotes the restriction operator from level $l$ to $l-1$, $P_l$ the prolongation operator from level $l-1$ to $l$.}
	\STATE{$k = 0$}
	\WHILE{$\|\nabla_x f^l(x_k^l)\|> \epsilon^l$}
		\STATE{$\bullet$ \label{model_choice}{\bf Model choice:}
		If $l>1$ compute $R_l\nabla_x f^l(x_k^l)$ and check \eqref{go_down_condition}. If $l=1$ or \eqref{go_down_condition} fails, go to Step \ref{step_taylor}. Otherwise, choose to go to Step \ref{step_taylor} or to Step \ref{step_lower}. }
		\STATE{$\bullet$ \label{step_taylor}  {\bf Taylor step computation:}  Define $m_{q,k}^l(x_k^l,s^l)=T_{q,k}^l(x_k^l,s^l)$, the Taylor series of $f^l(x_k^l+s^l)$ truncated at order $q$. Find a step $s_k^l$ that sufficiently reduces $m_{q,k}^l(x_k^l,s^l)+\frac{\lambda_k^l}{q+1}\|s^l\|^{q+1}$. Go to Step \ref{step_acceptance}.}
		\STATE{$\bullet$ \label{step_lower} {\bf Recursive step computation:}
		Define 
		\begin{align*}
		m_{q,k}^{l-1}(R_l~x_k^l,s^{l-1})&=f^{l-1}(R_l~x_k^l,s^{l-1})\\
		&+\sum_{i=1}^{q}\frac{1}{i!}[\mathcal{R}( \nabla^i_x f^l(x_k^l)) -\nabla^i_x f^{l-1}(R_l~x_{k}^l)](\underbrace{s^{l-1},\dots,s^{l-1}}_{i\,\mathrm{times}}).
		\end{align*}
		 Choose $\epsilon^{l-1}$ and call MAR$\mathbf{q}$($l-1$, $m_{q,k}^{l-1}$,$R_l~x_{k}^l$, $\lambda_k^l$, $\epsilon^{l-1}$)
		yielding an approximate solution $x^{l-1}_{*,k}$ of the minimization of $m_{q,k}^{l-1}$. Define  $s_{k}^l=P_l~(x^{l-1}_{*,k}-R_l~x_k^l)$ and $m_{q,k}^l(x_k^l,s^l)=m_{q,k}^{l-1}(R_l~x_k^l,s^{l-1})$ for all $s^l=Ps^{l-1}$.}
		\STATE{$\bullet$ \label{step_acceptance} {\bf Acceptance of the trial point:} Compute $\rho_k^{l}=\displaystyle \frac{f^l(x_k^l)-f^l(x_k^l+s_k^l)}{m_{q,k}^l(x_k^l)-m_{q,k}^l(x_k^l,s_k^l)}.$ }
		\IF{\label{step3a} $\rho_k^l\geq \eta_1$} \STATE{ $x_{k+1}^l=x_k^l+s_k^{l}$} \ELSE \STATE{\label{step3b}  $x_{k+1}^l=x_k^l$.}\ENDIF
		\STATE{$\bullet$ {\bf Regularization parameter update:} }
		\IF{$\rho_k^l\geq \eta_1$ } \STATE{ 
			$$\lambda_{k+1}^l =
			\bigg \{
			\begin{array}{ll}
			\max\{\lambda_{\min},\gamma_2\lambda_k^l\},  & \text{ if }\rho_k^l\geq \eta_2, \\
			\max\{\lambda_{\min},\gamma_1\lambda_k^l\},  & \text{ if } \rho_k^l< \eta_2\\
			\end{array}
			$$}
			\ELSE\STATE{ 
			$\lambda_{k+1}^l= \gamma_3\lambda_k^l$.}\ENDIF
	    \STATE{$k = k + 1$}
		\ENDWHILE
		\end{algorithmic}
\end{algorithm}

 Some comments are necessary to explain Step \ref{model_choice} in Algorithm \ref{algo}. The generic framework sketched in Algorithm \ref{algo} comprises different possible methods. Specifically, one of the flexible features (inherited by the method in \cite{rmtr}) is that, to ensure convergence, the
	minimization at lower levels can be stopped after the first successful
	iteration, as we will see in the next section. This therefore opens the possibility to consider both fixed form recursion patterns and free
	form ones. A free form pattern is obtained when Algorithm \ref{algo} is run carrying the minimization 
	at each level out, until the norm of the gradient becomes small enough. The actual
	recursion pattern is then uniquely determined by the progress of minimization at
	each level and may be difficult to forecast. By contrast, the fixed form recursion
	patterns are obtained by specifying a maximum number of successful iterations at
	each level, a technique directly inspired from the definitions of V- and W-cycles in
	multigrid algorithms \cite{book_mg}.

\section{Convergence theory}\label{sec_conv}
In this section, we provide a theoretical analysis of the proposed family of multilevel methods. Inspired by the convergence theory reported in \cite{Birgin2017}, we prove global convergence of the proposed methods to first-order critical points and we provide a worst-case complexity bound to reach such a point, generalizing the theory proposed in \cite{Birgin2017,rmtr}. At the same time the proposed analysis also appears as simpler than that in \cite{rmtr}, since the regularization parameter $\lambda_k$ is directly updated, rather than the trust-region radius, and since we use the stopping criterion \eqref{stopping_inner} as in \cite{Birgin2017}.  The use of this criterion allows for simpler convergence proofs and enables us to concentrate on the multilevel algorithm, that is the main contribution of the paper. Moreover, we also propose local convergence results, which also apply to the methods in \cite{Birgin2017}, and that extend those in \cite{yue2018quadratic} to higher-order models. 

Note that, as the methods are recursive, we can restrict the analysis to the two-level case.  
For the analysis we need the following regularity assumptions as  in \cite{Birgin2017}. 
\begin{assumption}\label{hp_lip_grad}
	Let $f^h$ and $f^H$ be $q$-times continuously differentiable and bounded below functions.
	Let us assume that the $q$-th derivative tensors of $f^h$ and $f^H$  are Lipschitz continuous, i.e. that there exist constants $L_h,L_H$ such that 
	\begin{align*}
	\|\nabla^q f^h(x)-\nabla^q f^h(y)\|_T &\leq (q-1)!\, L_h\, \|x-y\|\quad \text{ for all} \quad x,y\in\mathcal{D}^h,\\
	\|\nabla^q f^H(x)-\nabla^q f^H(y)\|_T &\leq (q-1)!\, L_H\, \|x-y\|\quad \text{ for all} \quad x,y\in\mathcal{D}^H,
	\end{align*}
	where $\|\cdot\|_T$ is the tensor norm recursively induced by the Euclidean norm on the space of $q$-th order tensors, which for a tensor $H$ of order $q$ is given by 
	\begin{equation*}
	\|H\|_T\overset{\mathrm{def}}{=}\max_{\|u_1\|=\dots=\|u_q\|=1}\lvert H(u_1,\dots,u_q)\rvert
	\end{equation*}
	where the action of $H$ on $(u_1,\dots,u_q)$ is given in Definition \eqref{def_tensor}. 
\end{assumption}

We remind three useful relations, following from Taylor's  theorem, see for example relations (2.3) and (2.4) in \cite{Birgin2017}.

\begin{lemma}
	 Let $g:\mathbb{R}^n\rightarrow\mathbb{R}$ be a $q$-times continuously differentiable function with  Lipschitz continuous  $q$-th order tensor, with $L$ the corresponding Lipschitz constant. Given its Taylor series $T_q(x,s)$ truncated at order $q$, it holds:
\begin{align}
g(x+s)=&T_q(x,s)+\frac{1}{(q-1)!}\int_{0}^{1}(1-\xi)^{q-1}[\nabla^q g(x+\xi s)-\nabla^q g(x)](\overbrace{s,\dots,s}^{q\,\mathrm{times}})\,d\xi,\label{taylor1}\\
&\lvert g(x+s)- T_q(x,s)\rvert \leq\frac{L}{q}\|s\|^{q+1},\label{taylor2}\\
&\|\nabla g(x+s)-\nabla_s T_q(x,s)\|\leq L\|s\|^{q}.\label{taylor3}
\end{align} 
\end{lemma}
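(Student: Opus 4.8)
The plan is to prove the three displayed relations \eqref{taylor1}, \eqref{taylor2} and \eqref{taylor3} in that order, since \eqref{taylor2} and \eqref{taylor3} will both follow by bounding the integral remainder that appears in \eqref{taylor1}. First I would establish \eqref{taylor1}: fix $x$ and $s$, and consider the scalar function $\phi(\xi) = g(x+\xi s)$ on $[0,1]$. Its derivatives are $\phi^{(i)}(\xi) = \nabla^i g(x+\xi s)(\underbrace{s,\dots,s}_{i})$ in the tensor notation of Definition \ref{def_tensor}. Applying the one-dimensional Taylor expansion with integral remainder to $\phi$ at $0$, evaluated at $1$, gives
\[
g(x+s) = \sum_{i=0}^{q} \frac{1}{i!}\nabla^i g(x)(\underbrace{s,\dots,s}_{i}) + \frac{1}{(q-1)!}\int_0^1 (1-\xi)^{q-1}\,\phi^{(q)}(\xi)\,d\xi .
\]
The finite sum is exactly $T_q(x,s)$; subtracting and adding $\frac{1}{(q-1)!}\int_0^1 (1-\xi)^{q-1}\,\nabla^q g(x)(s,\dots,s)\,d\xi$, and noting that $\int_0^1(1-\xi)^{q-1}d\xi = 1/q$ so that this added term is $\frac{1}{q!}\nabla^q g(x)(s,\dots,s)$ — which is already inside $T_q$ — one rearranges to obtain precisely \eqref{taylor1} with the difference $[\nabla^q g(x+\xi s)-\nabla^q g(x)]$ inside the integral.

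Next, \eqref{taylor2} follows from \eqref{taylor1} by taking absolute values and using the definition of the tensor norm $\|\cdot\|_T$ together with Lipschitz continuity of $\nabla^q g$: since $\|[\nabla^q g(x+\xi s)-\nabla^q g(x)](s,\dots,s)\| \le \|\nabla^q g(x+\xi s)-\nabla^q g(x)\|_T\,\|s\|^q \le (q-1)!\,L\,\|\xi s\|\,\|s\|^q = (q-1)!\,L\,\xi\,\|s\|^{q+1}$, we get $|g(x+s)-T_q(x,s)| \le \frac{1}{(q-1)!}\int_0^1(1-\xi)^{q-1}(q-1)!\,L\,\xi\,\|s\|^{q+1}\,d\xi = L\,\|s\|^{q+1}\int_0^1(1-\xi)^{q-1}\xi\,d\xi$. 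The elementary integral $\int_0^1(1-\xi)^{q-1}\xi\,d\xi = \frac{1}{q(q+1)} \le \frac{1}{q}$ yields the bound $\frac{L}{q}\|s\|^{q+1}$ claimed (in fact one could state the slightly sharper $\frac{L}{q(q+1)}$, but the stated form suffices).

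For \eqref{taylor3}, I would differentiate \eqref{taylor1} with respect to $s$ — or, more cleanly, apply the same argument as above to the gradient map. Specifically, $\nabla g(x+s) - \nabla_s T_q(x,s) = \nabla g(x+s) - \sum_{i=1}^{q}\frac{1}{(i-1)!}\nabla^i g(x)(\underbrace{s,\dots,s}_{i-1})$, and the right-hand side is the remainder of the order-$(q-1)$ Taylor expansion of the vector field $\nabla g$ about $x$; writing its integral form and bounding as before, using $\|\nabla^q g(x+\xi s)-\nabla^q g(x)\|_T \le (q-1)!\,L\,\xi\|s\|$, gives $\|\nabla g(x+s)-\nabla_s T_q(x,s)\| \le \frac{1}{(q-2)!}\int_0^1 (1-\xi)^{q-2}(q-1)!\,L\,\xi\|s\|^{q}d\xi = L\|s\|^q$ after evaluating $\int_0^1(1-\xi)^{q-2}\xi\,d\xi \cdot \frac{(q-1)!}{(q-2)!} = (q-1)\cdot\frac{1}{q(q-1)} = \frac1q \le 1$; wait — more carefully this needs the constant to come out at most $1$, which it does. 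Since the paper explicitly says this follows from Taylor's theorem and cites relations (2.3)–(2.4) of \cite{Birgin2017}, the main obstacle is purely bookkeeping: getting the tensor-notation derivatives of $\phi(\xi)=g(x+\xi s)$ right and tracking the combinatorial constants $1/(i!)$, $1/(q-1)!$ and the Beta-function integrals so that they collapse to the clean constants $L/q$ and $L$. There is no conceptual difficulty; I would simply be careful that the Lipschitz constant in Assumption \ref{hp_lip_grad} is stated with the factor $(q-1)!$ precisely so that these constants simplify.
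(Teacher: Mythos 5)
The paper does not actually prove this lemma --- it is stated as a reminder and delegated to relations (2.3)--(2.4) of the cited reference \cite{Birgin2017} --- so there is no in-paper argument to compare against; your proof supplies the standard one, and it is correct in substance. One bookkeeping slip should be fixed: your first display asserts
\[
g(x+s)=\sum_{i=0}^{q}\frac{1}{i!}\nabla^i g(x)(s,\dots,s)+\frac{1}{(q-1)!}\int_0^1(1-\xi)^{q-1}\phi^{(q)}(\xi)\,d\xi,
\]
which double-counts the order-$q$ term: Taylor's theorem with that integral remainder pairs it with the sum up to $i=q-1$ only. Your subsequent add-and-subtract of $\frac{1}{(q-1)!}\int_0^1(1-\xi)^{q-1}\nabla^q g(x)(s,\dots,s)\,d\xi=\frac{1}{q!}\nabla^q g(x)(s,\dots,s)$ is exactly the right manipulation, so the intended derivation of \eqref{taylor1} is correct once the sum's upper limit is $q-1$. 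The bounds for \eqref{taylor2} and \eqref{taylor3} are right, including the role of the $(q-1)!$ normalization in Assumption \ref{hp_lip_grad} and the Beta-function integrals (you in fact obtain the sharper constants $L/(q(q+1))$ and $L/q$). Two minor points worth a sentence each: for \eqref{taylor3} you implicitly use $\|T(u_1,\dots,u_{q-1})\|\le\|T\|_T\prod_j\|u_j\|$ for the vector-valued contraction, which follows from the definition of $\|\cdot\|_T$ by maximizing over a unit last argument; and the case $q=1$ of \eqref{taylor3} is not covered by the formula containing $(q-2)!$, but there it reduces directly to the assumed Lipschitz continuity of $\nabla g$.
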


\subsection{Global convergence}\label{sec_glob}
In this section we prove the global convergence property of the method. Our analysis proceeds in three steps. First, we bound the quantity $\lvert1-\rho_k\rvert$ to prove that $\lambda_k$ must be bounded above. Then, we relate the norm of the step and the norm of the gradient. Finally, we use these two ingredients to conclude proving that the norm of the gradient goes to zero. 

\subsubsection{Upper bound for the regularization parameter $\lambda_k$}
At iteration $k$ we either minimize (decrease) the regularized Taylor model \eqref{reg_taylor_model}, or the regularized lower level model \eqref{reg_lower_model}. 
Consequently, it respectively holds:
\begin{subequations}\label{dent}
	\begin{align}
	T_{q,k}^h(x_{k}^h)-T_{q,k}^h(x_{k}^h, s_k^h)&\geq \frac{\lambda_k}{q+1}\|s_k^h\|^{q+1},\label{den2}\\
	m_{q,k}^H(x_{0,k}^H)-m_{q,k}^H(x_{0,k}^H, s_k^H)&\geq \frac{\lambda_k}{q+1}\|s_k^H\|^{q+1}.\label{den}
	\end{align}
\end{subequations}
In both cases, the minimization process is stopped as soon as the stopping condition 
\begin{align}
\|\nabla_s T_{q,k}^h(\xkh,s_k^h)+\lambda_k\|\skh\|^{q-1}\skh\|&\leq \theta \|s_k^h\|^q, \quad \text{ or }\nonumber\\
	\|\nabla_s m_{q,k}^H(x_{0,k}^H,s_k^H)+\lambda_k\|s_k^H\|^{q-1}s_k^H\|&\leq \theta \|s_k^H\|^{q},\label{stopping}
\end{align}
\noindent for $\theta>0$ is satisfied, respectively.  In both cases we are sure that it will exist a point that satisfies \eqref{stopping}, as when the level is selected, a standard one-level optimization method is used, and the analysis in \cite{Birgin2017} applies. 

Let us consider the quantity
\begin{equation}\label{ratio}
\lvert 1-\rho_k\rvert =\Bigg\lvert 1-\frac{f^h(\xkh)-f^h(\xkh+s_k^h)}{m_{q,k}^h(\xkh)-m_{q,k}^h(\xkh,s_k^h)}\Bigg\rvert,
\end{equation}
with the denominator defined in \eqref{pred}. If at step $k$ the Taylor model is chosen, from relation \eqref{taylor2} applied to $f^h$ and \eqref{den2} we obtain the inequality:
\begin{equation*}
\lvert 1-\rho_k\rvert =\Bigg\lvert \frac{f^h(\xkh+s_k^h)-T_{q,k}^h(\xkh,s_k^h)}{m_{q,k}^h(\xkh)-m_{q,k}^h(\xkh,s_k^h)}\Bigg\rvert\leq \frac{L_h(q+1)}{\lambda_k q}.
\end{equation*}
If the lower level model is used, we have
\begin{equation*}
\lvert 1-\rho_k\rvert =\Bigg\lvert \frac{m_{q,k}^H(x_{0,k}^H)-m_{q,k}^H(x_{0,k}^H,s_k^H)-(f^h(\xkh)-f^h(\xkh+s_k^h))}{m_{q,k}^H(x_{0,k}^H)-m_{q,k}^H(x_{0,k}^H,s_k^H)}\Bigg\rvert.
\end{equation*}
Let us consider the numerator in this expression.  
From relations \eqref{lower_level_model} and \eqref{taylor1} applied to $f^H$, using its Taylor series $T_{q,k}^H$, it follows 
\begin{align}
&m_{q,k}^H(x_{0,k}^H)-m_{q,k}^H(x_{0,k}^H,s_k^H) \overset{\eqref{lower_level_model}}{=}\nonumber\\
&T_{q,k}^H(x_{0,k}^H,s_k^H)-f^H(x_{0,k}^H+s_k^H)-\sum_{i=1}^{q}\frac{1}{i!}\left[\mathcal{R}( \nabla^i_x f^h(\xkh)) \right](\overbrace{s_k^H,\dots,s_k^H}^{i\,\mathrm{times}}),\nonumber\\
\overset{\eqref{taylor1}}{=}
&-\frac{1}{(q-1)!}\int_{0}^{1} (1-\xi)^{q-1}\left[\nabla^q f^H(x_{0,k}^H+\xi s_k^H)-\nabla^q f^H(x_{0,k}^H)\right](\overbrace{s_k^H,\dots,s_k^H}^{q\;\mathrm{times}})d\xi\nonumber\\
&-\sum_{i=1}^{q}\frac{1}{i!}\left[\mathcal{R}( \nabla^i_x f^h(\xkh)) \right](\overbrace{s_k^H,\dots,s_k^H}^{i\,\mathrm{times}}).\label{num_low}
\end{align}
Similarly the relation \eqref{taylor1} applied to $f^h$ yields
\begin{align}
f^h(\xkh)&-f^h(\xkh+s_k^h)=f^h(\xkh)-T_{q,k}^h(\xkh,s_k^h)\nonumber\\
&-\frac{1}{(q-1)!}\int_{0}^{1}(1-\xi)^{q-1}[\nabla^qf^h(\xkh+\xi s_k^h)-\nabla^q f^h(\xkh)](\overbrace{s_k^h,\dots,s_k^h}^{q\; \mathrm{times}})d\xi.\label{num_up}
\end{align}
From relation \eqref{def_mathcalR} we can rewrite $f^h(\xkh)-T_{q,k}^h(\xkh,s_k^h)$ as:
\begin{align*}
f^h(\xkh)-T_{q,k}^h(\xkh,s_k^h)=&-\sum_{i=1}^{q}\frac{1}{i!}( \nabla^i_x f^h)(\xkh,\overbrace{Ps_k^H,\dots,Ps_k^H}^{i\; \mathrm{times}})\\
=&-\sum_{i=1}^{q}\frac{1}{i!}\left[\mathcal{R}( \nabla^i_x f^h(\xkh))\right](\overbrace{s_k^H,\dots,s_k^H}^{i\; \mathrm{times}}).
\end{align*}
Then, subtracting \eqref{num_up} from \eqref{num_low}, we obtain
\begin{align*}
&m_{q,k}^H(x_{0,k}^H)-m_{q,k}^H(x_{0,k}^H, s_k^H)-(f^h(\xkh)-f^h(\xkh+s_k^h))=\\
&-\frac{1}{(q-1)!}\int_{0}^{1} (1-\xi)^{q-1}[\nabla^q f^H(x_{0,k}^H+\xi s_k^H)-\nabla^q f^H(x_{0,k}^H)](\overbrace{s_k^H,\dots,s_k^H}^{q\,\mathrm{times}})\,d\xi\\
&+\frac{1}{(q-1)!}\int_{0}^{1} (1-\xi)^{q-1}[\nabla^q f^h(\xkh+\xi s_k^h)-\nabla^q f^h(x_{0,k}^h)](\overbrace{s_k^h,\dots,s_k^h}^{q\,\mathrm{times}})\,d\xi.
\end{align*}
Using Assumption \ref{hp_lip_grad}, we obtain:
\begin{align*}
&\lvert m_{q,k}^H(x_{0,k}^H)-m_{q,k}^H(x_{0,k}^H, s_k^H)-(f^h(\xkh)-f^h(\xkh+s_k^h))\rvert\\
&\leq\frac{1}{(q-1)!}\int_{0}^{1} (1-\xi)^{q-1}\lvert[\nabla^q f^H(x_{0,k}^H+\xi s_k^H)-\nabla^q f^H(x_{0,k}^H)](\overbrace{s_k^H,\dots,s_k^H}^{q\,\mathrm{times}})\rvert \,d\xi\\
&	+\frac{1}{(q-1)!}\int_{0}^{1} (1-\xi)^{q-1}\lvert[\nabla^q f^h(\xkh+\xi s_k^h)-\nabla^q f^h(x_{k}^h)](\overbrace{s_k^h,\dots,s_k^h}^{q\,\mathrm{times}})\rvert\, d\xi\\
&\leq \frac{1}{q!}\|s_k^H\|^q\max_{\xi\in[0,1]}\|\nabla^q f^H(x_k^H+\xi s_k^H)-\nabla^q f^H(x_k^H)\|_T\\
&+\frac{1}{q!}\|s_k^h\|^q\max_{\xi\in[0,1]}\|\nabla^q f^h(\xkh+\xi s_k^h)-\nabla^q f^h(\xkh)\|_T\leq \frac{1}{q}\left(L_H+L_h\kappa_R^{q+1}\right) \|s_k^H\|^{q+1}.
\end{align*}
From relation \eqref{den} we finally obtain:
\begin{equation*}
\lvert 1-\rho_k\rvert \leq 
\frac{(q+1)\left(L_H+L_h\kappa_R^{q+1}\right)}{q\lambda_k}.
\end{equation*}
Then, in both cases (when either a Taylor model or a lower level model is used), it exists a strictly positive constant $K$ such that the following relation holds:
\begin{align}\label{ratio2}
\lvert 1-\rho_k\rvert \leq \frac{K}{\lambda_k}, &&  K=\begin{cases} \displaystyle \frac{(q+1)L_h}{q}& \text{(Taylor model)}, \\
\displaystyle \frac{(q+1)\left(L_H+L_h\kappa_R^{q+1}\right)}{q} &\text{(lower level model)}.
\end{cases}
\end{align}

Using this last relation and the updating rule of the regularization parameter, we deduce that $\lambda_k$ must be bounded above. Indeed, in case of  unsuccessful iterations, $\lambda_k$ is increased. If $\lambda_k$ is increased, the ratio appearing in the right hand side of \eqref{ratio2} is progressively decreased, until it becomes smaller than $1-\eta_1$. In this case, $\rho_k>\eta_1$, so a successful step is taken and $\lambda_k$ is decreased. Hence $\lambda_k$ cannot be greater than
\begin{equation}\label{lambda_max}
\lambda_{\max}=\displaystyle \frac{K}{1-\eta_1}.
\end{equation} 

\subsubsection{Relating the steplength to the norm of the gradient}
Our next step is to show that the steplength cannot be arbitrarily small, compared to the norm of the gradient of the objective function.
If the Taylor model is used, from \cite[Lemma 2.3]{Birgin2017} it follows:
\begin{equation}\label{K1}
\|\nabla_x f^h(\xkh+s_k^h)\|\leq(L_h+\theta+\lambda_k)\|s_k^h\|^q:=K_1\|s_k^h\|^q.
\end{equation} 
If the lower level model is chosen, we have: 
\begin{align*}
\|R\nabla_x f^h(\xkh+s_k^h) \|\leq&\Big\|R\left[\nabla_x f^h(\xkh+s_k^h)-\nabla_s T_{q,k}^h(\xkh,s_k^h)\right]\Big\|\\
&+\|R\nabla_s T_{q,k}^h(\xkh,s_k^h)-\nabla_s m_{q,k}^H(x_{0,k}^H,s_k^H)\|\\
&+\|\nabla_s m_{q,k}^H (x_{0,k}^H,s_k^H)+\lambda_k\|s_k^H\|^{q-1}s_k^H\|+\lambda_k\|s_k^H\|^{q}.
\end{align*} 
By \eqref{taylor3}, the first term can be bounded by $\kappa_R L_h \|s_k^h\|^q$. Considering that $s_k^h=Ps_k^H$ and $\|P\|\leq \kappa_R$, we obtain the upper bound $\kappa_R^2 L_h \|s_k^H\|^q$. Regarding the second term, taking into account that from relations $s_k^h=Ps_k^H$, $R=P^T$, and \eqref{def2}, for all $p^H\in\mathbb{R}^{n_H}$ it holds:
\begin{align*}
\langle[\mathcal{R}(\nabla_x^if^h(x_k^h))](\underbrace{s_k^H,\dots,s_k^H}_{i-1\,\mathrm{times}}),p^H\rangle&=\langle\nabla_x^if^h(x_k^h,\underbrace{Ps_k^H,\dots,Ps_k^H}_{i-1\,\mathrm{times}}),Pp^H\rangle\\
&=\langle R[\nabla_x^if^h(x_k^h,\underbrace{Ps_k^H,\dots,Ps_k^H}_{i-1\,\mathrm{times}})],p^H\rangle,
\end{align*}
we can write 
\begin{align*}
R\nabla_s T_{q,k}^h(\xkh,Ps_k^H)&=\sum_{i=1}^{q}\frac{1}{(i-1)!}R\nabla^i_x f^h(\xkh)(\underbrace{Ps_k^H,\dots,Ps_k^H}_{i-1\,\mathrm{times}})\\
&=\sum_{i=1}^{q}\frac{1}{(i-1)!}\left[\mathcal{R}(\nabla^i_x f^h(\xkh))\right](\underbrace{s_k^H,\dots,s_k^H}_{i-1\,\mathrm{times}}).
\end{align*}

Then, from 
\begin{align}
\nabla_s m^H_{q,k}(x_{0,k}^H,s_k^H)=&\nabla_x f^H(x_{0,k}^H+s_k^H)\nonumber\\
&+\sum_{i=1}^{q}\frac{1}{(i-1)!}\left[\mathcal{R}(\nabla^i_x f^h(\xkh))-\nabla^i_x f^H(x_{0,k}^H)\right](\underbrace{s_k^H,\dots,s_k^H}_{i-1\,\mathrm{times}})\label{gradient_lower_model},
\end{align}
we obtain
\begin{align*}
\|R\nabla_s T_{q,k}^h(\xkh,s_k^h)-\nabla_s m_{q,k}^H(x_{0,k}^H,s_k^H)\|
&=\Big\|\nabla_x f^H(x_{0,k}^H+s_k^H)-\nabla_s T_{q,k}^H(x_{0,k}^H,s_k^H)\Big\|,
\end{align*}
which represents the Taylor remainder for the approximation of $\nabla_x f^H$ by  $\nabla_s T_{q,k}^H$.  Therefore, by relation \eqref{taylor3}, this quantity can be bounded above by 
$L_H\|s_k^H\|^{q}$.
The third term, from \eqref{stopping}, is less than $\theta\|s_k^H\|^{q}$. 
Then, since  $\lambda_k\leq\lambda_{\max}$, we finally obtain
\begin{equation}\label{K2}
\|R\nabla_x f^h(\xkh+s_k^h)\|\leq \left(\kappa_R^2L_h+L_H+\theta+\lambda_{\max}\right)\|s_k^H\|^{q}:=K_2\|s_k^H\|^{q}.
\end{equation}

\subsubsection{Proof of global convergence}
Let us consider the sequence of successful iterations ($\rho_k\geq\eta_1$). They are divided into two groups, $K_{s,f}$ the successful iterations at which the fine model has been employed and $K_{s,l}$ the ones at which the lower level model has been employed. 
Let us define $k_1$ the index of the first successful iteration. We remind that at successful iterations $\rho_k\geq\eta_1$. Due to the updating rule of the regularization parameter in Algorithm \ref{algo} we have $\lambda_k\geq\lambda_{\min}$. Hence from relations \eqref{pred}, \eqref{dent}, \eqref{K1} and \eqref{K2}, \eqref{go_down_condition} it follows that: 
\begin{align}
f^h(x_{k_1}^h)-&\liminf_{k\rightarrow\infty} f^h(\xkh) \geq\sum_{k succ}f^h(\xkh)-f^h(\xkh+s_k^h)\nonumber\\
\overset{\eqref{pred}}{\geq}&\eta_1 \sum_{K_{s,l}}(m_{q,k}^H(x_{0,k}^H)-m_{q,k}^H(x_{0,k}^H, s_k^H))+\eta_1 \sum_{K_{s,f}}(T_{q,k}^h(\xkh)-T_{q,k}^h(\xkh,s_k^h))\nonumber \\
\overset{\eqref{dent}}{\geq}& \frac{\eta_1\lambda_k}{q+1}\left(\sum_{K_{s,l}}\|s_k^H\|^{q+1}+\sum_{K_{s,f}}\|s_k^h\|^{q+1}\right)\nonumber\\
\overset{\eqref{K1}+\eqref{K2}}{\geq}& \frac{\eta_1\lambda_{\min}}{q+1}\Bigg(\frac{1}{K_2^{\frac{q+1}{q}}}\sum_{K_{s,l}}\|R\nabla_x f^h(\xkh+s_k^h)\|^{\frac{q+1}{q}}+\frac{1}{K_1^{\frac{q+1}{q}}}\sum_{K_{s,f}}\|\nabla_x f^h(\xkh+s_k^h)\|^{\frac{q+1}{q}}\Bigg)\nonumber \\
\overset{\eqref{go_down_condition}}{\geq}&
\frac{\eta_1\lambda_{\min}}{q+1}\Bigg(\frac{1}{K_2^{\frac{q+1}{q}}}\sum_{K_{s,l}}\kappa_H^{\frac{q+1}{q}}\|\nabla_x f^h(\xkh+s_k^h)\|^{\frac{q+1}{q}}+\frac{1}{K_1^{\frac{q+1}{q}}}\sum_{K_{s,f}}\|\nabla_x f^h(\xkh+s_k^h)\|^{\frac{q+1}{q}}\Bigg)\label{compl}.
\end{align}
Hence we conclude that $\sum_{K_{s,f}\cup K_{s,l}}\|\nabla_x f^h(\xkh+s_k^h)\|$ is a bounded series and therefore has a convergent subsequence. Then,  $\|\nabla_x f^h(\xkh+s_k^h)\|$ converges to zero on the subsequence of successful iterations. 

We can then state the global convergence property towards first-order critical points in the following theorem.
\begin{theorem}\label{teo_glob}
	Let Assumptions \ref{ass_R} and \ref{hp_lip_grad} hold. Let $\{\xkh\}$ be the sequence of fine level iterates generated by Algorithm \ref{algo}. Then, $\{\|\nabla_x f^h(\xkh)\|\}$ converges to zero on the subsequence of successful iterations. 
\end{theorem}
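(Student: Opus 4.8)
The plan is to establish the theorem in three stages, and since the method is recursive it suffices to argue in the two-level setting. \emph{Stage one: the regularization parameter stays bounded.} I would start from the model-decrease inequalities \eqref{dent} and the inner-iteration stopping conditions \eqref{stopping}, and bound $\lvert 1-\rho_k\rvert$. When the Taylor model is used this is immediate from the Taylor error bound \eqref{taylor2} applied to $f^h$ together with \eqref{den2}. When the lower-level model is used, I would expand both $m_{q,k}^H(x_{0,k}^H)-m_{q,k}^H(x_{0,k}^H,s_k^H)$ and $f^h(\xkh)-f^h(\xkh+s_k^h)$ via the integral Taylor formula \eqref{taylor1}, using the coherence relation \eqref{def_mathcalR} (and $s_k^h=Ps_k^H$) to cancel all polynomial contributions, so that only the two $q$-th order remainder integrals survive. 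Lipschitz continuity of the $q$-th derivative tensors (Assumption \ref{hp_lip_grad}) together with $\|P\|\le\kappa_R$ then gives $\lvert1-\rho_k\rvert\le K/\lambda_k$ with $K$ as in \eqref{ratio2}. The update rule for $\lambda_k$ then forces $\lambda_k\le\lambda_{\max}=K/(1-\eta_1)$: whenever $\lambda_k$ would exceed this threshold the iteration is successful, so $\lambda_k$ is not increased.

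\emph{Stage two: the steplength is not too small relative to the gradient.} For a Taylor step I would simply invoke \cite[Lemma 2.3]{Birgin2017} to get \eqref{K1}. For a recursive step I would split $\|R\nabla_x f^h(\xkh+s_k^h)\|$ by the triangle inequality into three pieces: the Taylor remainder of $\nabla_x f^h$, bounded by \eqref{taylor3} and $\|R\|,\|P\|\le\kappa_R$; the difference $\|R\nabla_s T_{q,k}^h(\xkh,s_k^h)-\nabla_s m_{q,k}^H(x_{0,k}^H,s_k^H)\|$, which after using \eqref{def2} and the explicit gradient expression \eqref{gradient_lower_model} of the lower-level model is exactly the Taylor remainder of $\nabla_x f^H$, again controlled by \eqref{taylor3}; and the inner-iteration residual plus $\lambda_k\|s_k^H\|^q$, bounded by $(\theta+\lambda_{\max})\|s_k^H\|^q$ thanks to \eqref{stopping} and Stage one. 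Collecting terms yields \eqref{K2}.

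\emph{Stage three: summability and conclusion.} I would partition the successful iterations into $K_{s,f}$ (Taylor model used) and $K_{s,l}$ (lower-level model used). Using $\rho_k\ge\eta_1$, the predicted-reduction identity \eqref{pred}, the model-decrease bounds \eqref{dent}, $\lambda_k\ge\lambda_{\min}$, the step/gradient estimates \eqref{K1}--\eqref{K2}, and the descent condition \eqref{go_down_condition}, I would telescope $f^h(x_{k_1}^h)-\liminf_{k\to\infty}f^h(\xkh)$ into a positive constant times $\sum_{K_{s,f}\cup K_{s,l}}\|\nabla_x f^h(\xkh+s_k^h)\|^{\frac{q+1}{q}}$, exactly as in \eqref{compl}. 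Since $f^h$ is bounded below, the left-hand side is finite, so the series converges and its terms tend to zero; because $x_{k+1}^h=\xkh+s_k^h$ on successful iterations, this gives $\|\nabla_x f^h(\xkh)\|\to0$ along the subsequence of successful iterations.

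The main obstacle is the lower-level case in Stages one and two: one must handle the tensor/coherence algebra carefully so that the polynomial parts of the two Taylor expansions cancel and the leftover differences are correctly identified as genuine Taylor remainders of $f^H$ (respectively $\nabla_x f^H$), and one must track the powers of $\kappa_R$ accurately — in particular the factor $\kappa_R^{q+1}$ coming from $\|Ps_k^H\|^{q+1}\le\kappa_R^{q+1}\|s_k^H\|^{q+1}$. Everything else is routine bookkeeping once these identities are in place.
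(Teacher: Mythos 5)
Your proposal is correct and follows essentially the same three-stage route as the paper: bounding $\lvert 1-\rho_k\rvert$ to obtain $\lambda_k\le\lambda_{\max}$, relating $\|s_k^h\|$ (resp.\ $\|s_k^H\|$) to the gradient via \eqref{K1}--\eqref{K2}, and telescoping the function decrease over successful iterations split into $K_{s,f}$ and $K_{s,l}$ exactly as in \eqref{compl}. The points you flag as delicate (the cancellation of polynomial terms via \eqref{def_mathcalR} and the $\kappa_R^{q+1}$ bookkeeping) are precisely the ones the paper works through, and your treatment of them is accurate.
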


\subsection{Worst-case complexity}\label{sec_compl}
We now want to evaluate the worst-case complexity of our methods, to reach a first order stationary point.  We assume then that the procedure is stopped as soon as $\|\nabla_x f^h(\xkh)\|\leq\epsilon$ for $\epsilon>0$. The proof is similar to that of Theorem 2.5 in \cite{Birgin2017}. 

To evaluate the complexity of the proposed methods, we have to bound the number of successful and unsuccessful iterations performed before the stopping condition is met.  
Let us then define $k_f$ the index of the last  iterate for which $\|\nabla_x f^h(\xkh)\|>\epsilon$, $K_s=\{0<j\leq k_f\, |\, \rho_j\geq \eta_1\}$ the set of successful iterations before iteration $k_f$, and $K_u$ its complementary in $\{1,\dots,k_f\}$. 
We can use the same reasoning as that used to derive \eqref{compl}, but considering in the sum just the successful iterates in $K_s$. Remind that before termination  $\|\nabla_x f^h(\xkh)\|>\epsilon$ and, in case the lower level model is used, $\|R\nabla f^H(\xkh)\|>\kappa_H\|\nabla f^H(\xkh)\|>\kappa_H\epsilon$ (otherwise at that iteration the Taylor model would have been used). It then follows:
\begin{align*}
f^h(x_{k_1}^h)-\liminf_{k\rightarrow\infty} f^h(\xkh) &\geq f^h(x_{k_1}^h)- f^h(x_{k_f+1}^h)= \sum_{j\in K_s}f^h(\xkh)-f^h(\xkh+s_k^h)\\ &\geq\frac{\eta_1\lambda_{\min}}{q+1}\min\Big\{\frac{\kappa_H}{K_2},\frac{1}{K_1}\Big\}^{\frac{q+1}{q}}\lvert K_{s}\rvert\epsilon^{\frac{q+1}{q}},
\end{align*}
from which we get the desired bound on the total number of successful iterations. We can then bound the cardinality of $K_u$, with respect to the cardinality of $K_s$. From the updating rule of the regularization parameter, it holds:
\begin{equation*}
\gamma_1\lambda_k\leq\lambda_{k+1}, \, k\in K_s\,\qquad \gamma_3\lambda_k=\lambda_{k+1}, \, k\in K_u.
\end{equation*}
Then, proceeding inductively, we conclude that:
\begin{equation*}
\lambda_0\gamma_1^{\lvert K_s\rvert}\gamma_3^{\lvert K_u\rvert}\leq\lambda_{k_f}\leq \lambda_{\max}.
\end{equation*}
Then, 
\begin{equation*}
\lvert K_s\rvert \log\gamma_1+\lvert K_u\rvert \log\gamma_3\leq\log\frac{\lambda_{\max}}{\lambda_0}, 
\end{equation*}
and, given that $\gamma_1<1$, we obtain: 
\begin{equation*}
\lvert K_u\rvert \leq\frac{1}{\log\gamma_3}\log\frac{\lambda_{\max}}{\lambda_0}+\lvert K_s\rvert \frac{\lvert\log\gamma_1\rvert}{\log\gamma_3}.
\end{equation*}
We can then state the following result.
\begin{theorem}\label{teo_compl}
	Let Assumptions \ref{ass_R} and \ref{hp_lip_grad}. Let $f_{low}$ denote a lower bound on $f$ and let $k_1$ denote the index of the first successful iteration in Algorithm \ref{algo}. Then, given an absolute accuracy level $\epsilon >0$, Algorithm \ref{algo} needs at most 
	\begin{equation*}
	K_3 \frac{(f(x_{k_1})-f_{low})}{\epsilon^{\frac{q+1}{q}}}\Bigg(1+\frac{\lvert\log\gamma_1\rvert}{\log\gamma_3}\Bigg)+\frac{1}{\log\gamma_3}\log\left(\frac{\lambda_{\max}}{\lambda_0}\right)
	\end{equation*}
	iterations in total to produce an iterate $\xkh$ such that $\|\nabla_x f(x_k)\|\leq\epsilon$, where 
	\begin{equation*}
	K_3:=\frac{q+1}{\eta_1\lambda_{\min}}\max\Big\{K_1,\frac{K_2}{\kappa_H}\Big\}^{q+1/q},
	\end{equation*}
	with $K_1$ and $K_2$ defined in \eqref{K1}, \eqref{K2},  $\gamma_1,\gamma_3,\lambda_0,\lambda_{\min}$ defined in Algorithm \ref{algo} and $\lambda_{\max}$ defined in \eqref{lambda_max}.
\end{theorem}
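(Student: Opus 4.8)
The plan is to assemble ingredients already established before the statement: the uniform bound $\lambda_k\le\lambda_{\max}$ from \eqref{lambda_max}, the step-versus-gradient inequalities \eqref{K1}--\eqref{K2}, and the counting argument outlined immediately above the theorem. The argument is essentially bookkeeping, so I do not expect a genuine obstacle; I would organise it into two estimates and a final summation, the only care being the points flagged at the end.

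\textbf{Bounding the successful iterations.} First I would repeat the chain of inequalities that produced \eqref{compl}, but restricting all sums to the set $K_s=\{0<j\le k_f\,|\,\rho_j\ge\eta_1\}$ of successful iterations occurring before $k_f$. Before $k_f$ one has $\|\nabla_x f^h(x_k^h)\|>\epsilon$, and when the lower level model is used $\|R\nabla_x f^h(x_k^h)\|>\kappa_H\epsilon$ by \eqref{go_down_condition} (otherwise the Taylor model would have been selected in Step \ref{model_choice}); combined with \eqref{pred}, \eqref{dent}, $\lambda_k\ge\lambda_{\min}$ and \eqref{K1}--\eqref{K2}, and telescoping the resulting decreases (the iterate being unchanged on unsuccessful iterations), this gives
\begin{equation*}
f(x_{k_1})-f_{low}\ \ge\ f^h(x_{k_1}^h)-f^h(x_{k_f+1}^h)\ \ge\ \frac{\eta_1\lambda_{\min}}{q+1}\,\min\Big\{\frac{\kappa_H}{K_2},\frac{1}{K_1}\Big\}^{\frac{q+1}{q}}\lvert K_s\rvert\,\epsilon^{\frac{q+1}{q}},
\end{equation*}
using $f^h(x_{k_f+1}^h)\ge f_{low}$. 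Since $\min\{\kappa_H/K_2,1/K_1\}^{(q+1)/q}=\max\{K_1,K_2/\kappa_H\}^{-(q+1)/q}$, rearranging yields $\lvert K_s\rvert\le K_3\,(f(x_{k_1})-f_{low})/\epsilon^{(q+1)/q}$ with $K_3$ as in the statement.

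\textbf{Bounding the unsuccessful iterations and concluding.} Next, from the update rule of Algorithm \ref{algo} one has $\gamma_1\lambda_k\le\lambda_{k+1}$ for $k\in K_s$ and $\gamma_3\lambda_k=\lambda_{k+1}$ for $k\in K_u$, so an easy induction gives $\lambda_0\gamma_1^{\lvert K_s\rvert}\gamma_3^{\lvert K_u\rvert}\le\lambda_{k_f}\le\lambda_{\max}$; taking logarithms and using $\gamma_1<1<\gamma_3$ produces $\lvert K_u\rvert\le\frac{1}{\log\gamma_3}\log(\lambda_{\max}/\lambda_0)+\lvert K_s\rvert\,\lvert\log\gamma_1\rvert/\log\gamma_3$. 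Adding $\lvert K_s\rvert+\lvert K_u\rvert$ and substituting the bound on $\lvert K_s\rvert$ gives precisely the claimed total. The points needing care, rather than being obstacles, are: the passage from the $\liminf$ used in \eqref{compl} to the explicit lower bound $f_{low}$ (immediate); the algebraic identification of the worst case over the two model branches by the single constant $\max\{K_1,K_2/\kappa_H\}$; and the remark --- as in \cite{Birgin2017} --- that this very bound on $\lvert K_s\rvert+\lvert K_u\rvert$ is what guarantees $k_f$ is finite, so that the stopping criterion $\|\nabla_x f(x_k)\|\le\epsilon$ is indeed reached after the stated number of iterations.
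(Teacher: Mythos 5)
Your proposal is correct and follows essentially the same route as the paper: you restrict the telescoping decrease from \eqref{compl} to the successful iterations in $K_s$, use the pre-termination bounds $\|\nabla_x f^h(x_k^h)\|>\epsilon$ and $\|R\nabla_x f^h(x_k^h)\|>\kappa_H\epsilon$ together with \eqref{dent}, \eqref{K1}--\eqref{K2} to bound $\lvert K_s\rvert$, and then control $\lvert K_u\rvert$ through the multiplicative update of $\lambda_k$ and the ceiling $\lambda_{\max}$. The bookkeeping, including the identification $\min\{\kappa_H/K_2,1/K_1\}^{-(q+1)/q}=\max\{K_1,K_2/\kappa_H\}^{(q+1)/q}$, matches the paper's argument.
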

Theorem \ref{teo_compl} reveals that the use of lower level steps does not deteriorate the complexity of the method, and that the complexity bound $O(\epsilon^{-\frac{q+1}{q}})$ is preserved. This is a very satisfactory result, because each iteration of the multilevel methods will be less expensive than one iteration of the corresponding one-level method, thanks to the use of the cheaper lower level models. Consequently, if the number of iterations in the multilevel strategy is not increased, we can expect global computational savings.

\subsection{Local convergence}\label{sec_loc}
In this section we study the local convergence of the proposed methods towards second-order stationary points. We assume $q\geq2$ in this section,  otherwise the problem is not well-defined. 
Thanks to the use of high order models, our methods are expected to attain a fast local convergence rate, especially for growing $q$. The results reported here are inspired by \cite{yue2018quadratic} and extend the analysis proposed therein.
 
We denote by $\mathcal{X}$ the set of second-order critical points of $f$, i.e. of points $x^*$ satisfying the second-order necessary conditions:
\begin{equation*}
\nabla_x f(x^*)=0,\quad \nabla_x^2 f(x^*)\succeq 0, 
\end{equation*}
i.e. $\nabla_x^2 f(x^*)$ is a symmetric positive semidefinite matrix. 
We denote by $\mathcal{B}(x, \rho)=\{y \,s.t.\, \|y-x\| \leq \rho\}$ and for all $x\in\mathbb{R}^n$,\, $\mathcal{L}(f(x))=\{y\in\mathbb{R}^n \,\vert\, f(y)\leq f(x)\}$ for $f: \mathbb{R}^n \rightarrow \mathbb{R}$.

\begin{remark}
	From the assumption that $f$ is a $q$ times continuously differentiable function, it follows that its $i$-th derivative tensor is locally Lipschitz continuous for all $i\leq q-1$. 
	\label{remark}
\end{remark}

\vspace{5pt}
 Following \cite{yue2018quadratic}, we first prove an intermediate lemma that allows us to relate, at generic iteration $k$, the norm of the step and the distance of the current iterate from the space of second-order stationary points. This lemma holds without need of assuming a stringent non-degeneracy condition, but rather under a local error bound condition, which is a much weaker requirement as it can be satisfied also when $f$ has non isolated second-order critical points. 
\begin{assumption}\label{ass_eb}
	There exist strictly positive scalars $\kappa_{EB},\rho>0$ such that
	\begin{equation}
	\mathrm{dist}(x,\mathcal{X})\leq\kappa_{EB}\|\nabla_x f(x)\|,\quad \forall x\in\mathcal{N}(\mathcal{X},\rho),
	\end{equation}
	where $\mathcal{X}$ is the set of second-order critical points of $f$, $\mathrm{dist}(x,\mathcal{X})$ denotes the  distance of $x$ to $\mathcal{X}$ and $\mathcal{N}(\mathcal{X},\rho)=\{x\; \vert \;\mathrm{dist}(x,\mathcal{X})\leq\rho\}$.
\end{assumption}
This condition has been proposed for the first time in \cite{yue2018quadratic}. It is different from other error bound conditions in the literature as, in contrast to them, $\mathcal{X}$ is not the set of first-order critical points, but of second-order-critical points. In addition to being  useful for proving convergence, it is also interesting on its own, as it is shown to be equivalent to a quadratic growth condition (\cite[Theorem 1]{yue2018quadratic}) under mild assumptions on  $f$.

\begin{lemma}\label{lemma}
	Let Assumptions \ref{ass_R} and \ref{hp_lip_grad} hold. Let $\{\xkh\}$ be the sequence generated by Algorithm \ref{algo} and $x_k^*$ be a projection point of $\xkh$ onto $\mathcal{X}$. Assume that it exists a strictly positive constant $\underline{\rho}$ such that $\{\xkh\}\in \mathcal{B}(x_k^*,\underline{\rho})$ and that $\nabla_x^2f$ is Lipschitz continuous in $\mathcal{B}(x_k^*,\underline{\rho})$ with Lipschitz constant $L_2$. Then, it holds:
	\begin{equation}\label{step_error}
	\|\skh\|\leq C \,\mathrm{dist}(\xkh,\mathcal{X}),
	\end{equation}
	with
	\begin{align*}
C=	\begin{cases}
	C_f= \displaystyle \frac{1}{2\lambda_{\max}}\left[L_2+\sqrt{L_2^2+4L_2\lambda_{\max}}\right],& \text{ (Taylor model)},\\
	\kappa_RC_c=\displaystyle \frac{\kappa_R}{2\lambda_{\max}}\left[\kappa_RL_2+\sqrt{\kappa_R^2L_2^2+4L_2\kappa_R\lambda_{\max}}\right], &\text{(lower level model)},
	\end{cases}
	\end{align*}
	with $\lambda_{\max}$ and $\kappa_R$ defined respectively in \eqref{lambda_max} and Assumption \ref{ass_R}.
	\end{lemma}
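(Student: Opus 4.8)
The plan is to follow the strategy of \cite{yue2018quadratic}, carried out in parallel for the two possible forms of the step; note that no error‑bound hypothesis (Assumption \ref{ass_eb}) is used here. Fix an iteration $k$, set $d:=\mathrm{dist}(\xkh,\mathcal{X})=\|\xkh-x_k^*\|$, and assume $\skh\neq 0$, the other case being trivial. Since $x_k^*\in\mathcal{X}$ we have $\nabla_x f(x_k^*)=0$ and $\nabla_x^2 f(x_k^*)\succeq 0$; moreover, by hypothesis, the segment joining $x_k^*$ and $\xkh$ lies in $\mathcal{B}(x_k^*,\underline{\rho})$, on which $\nabla_x^2 f$ is $L_2$‑Lipschitz. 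These two facts about $x_k^*$ are exactly what will convert a bound on $\|\skh\|$ into a bound on the distance to $\mathcal{X}$.

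\textbf{Taylor model.} Here $\skh$ satisfies the model decrease \eqref{den2} and the inner stopping test \eqref{stopping}. First I would rewrite \eqref{den2} as an inequality for the low‑order part of the Taylor expansion of $f^h$ at $\xkh$ evaluated at $\skh$, the terms of order larger than two being controlled, with the help of \eqref{stopping} and the local boundedness of the derivative tensors (Remark \ref{remark}), by a remainder of size $O(\|\skh\|^{3})$. Next I would re‑expand $\nabla_x f^h(\xkh)$ and $\nabla_x^2 f^h(\xkh)$ about $x_k^*$: because $\nabla_x f^h(x_k^*)=0$ one has $\nabla_x f^h(\xkh)=\nabla_x^2 f^h(x_k^*)(\xkh-x_k^*)+r_k$ with $\|r_k\|\leq\tfrac{L_2}{2}d^2$, and $\nabla_x^2 f^h(\xkh)=\nabla_x^2 f^h(x_k^*)+E_k$ with $\|E_k\|\leq L_2 d$. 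Substituting these and completing the square on the terms carrying the positive semidefinite matrix $\nabla_x^2 f^h(x_k^*)$ — which can therefore only help — the inequality reduces to one of the form $\lambda_k\|\skh\|^{q+1}\leq L_2 d\,\|\skh\|^{q}+L_2 d^2\|\skh\|^{q-1}$. Dividing by $d^2\|\skh\|^{q-1}$ gives a quadratic inequality for the ratio $t:=\|\skh\|/d$, namely $\lambda_{\max}t^2-L_2 t-L_2\leq 0$, whose positive root is precisely $C_f$, so that $\|\skh\|\leq C_f\,d$.

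\textbf{Lower level model.} The argument is structurally identical, now starting from the decrease \eqref{den} and from \eqref{stopping} written for $m_{q,k}^H$ at $\xH=R\xkh$, with step $\sH$ and with $\skh=P\sH$. The key remark is that, by the coherence relations \eqref{def_mathcalR}--\eqref{def2} and by formula \eqref{gradient_lower_model}, the low‑order derivatives of $m_{q,k}^H$ at $\xH$ coincide with those of $f^h$ at $\xkh$ composed with $R$ (and with $P$ on the right for the second‑order term); hence the same Taylor expansions of $\nabla_x f^h(\xkh)$ and $\nabla_x^2 f^h(\xkh)$ about $x_k^*$ can be inserted, each passage between the two levels costing a factor $\|R\|$ or $\|P\|$, both bounded by $\kappa_R$. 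This leads to a quadratic inequality in $\|\sH\|/d$ of the same type but with $L_2$ replaced by $\kappa_R L_2$ in the linear term, giving $\|\sH\|\leq C_c\,d$; since $\skh=P\sH$ and $\|P\|\leq\kappa_R$, we conclude $\|\skh\|\leq\kappa_R C_c\,d$.

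The part I expect to be the main obstacle is closing the quadratic inequality while keeping only $L_2$ and the algorithmic constants in the final bound: completing the square leaves a term proportional to $(\xkh-x_k^*)^T\nabla_x^2 f^h(x_k^*)(\xkh-x_k^*)$, and it has to be absorbed using the semidefiniteness of $\nabla_x^2 f^h(x_k^*)$ together with the Lipschitz estimate rather than by any crude bound on the Hessian norm; the term $(\skh)^T\nabla_x^2 f^h(\xkh)\skh$ must be treated the same way. For the lower level model there is, in addition, the bookkeeping of the correction terms defining $m_{q,k}^H$ and the need to check that $\xH$ and $\xH+\sH$ lie where Assumption \ref{hp_lip_grad} (and the $L_2$‑Lipschitz hypothesis transported to level $H$) can be invoked, so that the various $R$‑ and $P$‑factors collapse into the single constant $\kappa_R$ appearing in $C_c$.
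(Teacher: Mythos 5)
Your overall scaffolding (reduce to the quadratic $\lambda_k t^2-L_2t-L_2\le 0$ in $t=\|\skh\|/\mathrm{dist}(\xkh,\mathcal{X})$, identify $C_f$ with its positive root, and repeat at the coarse level with $\kappa_R$-factors from $\|R\|,\|P\|\le\kappa_R$ and $\skh=P\sH$) matches the paper's, and you are right that Assumption \ref{ass_eb} is not needed here. But the route you propose to reach that quadratic does not close, and the obstacle is exactly the one you flag without resolving. Starting from the scalar decrease \eqref{den2} and completing the square on $B:=\nabla_x^2f^h(x_k^*)\succeq0$ leaves, with $e:=\xkh-x_k^*$,
\begin{equation*}
-\nabla_xf^h(\xkh)^T\skh-\tfrac12(\skh)^T\nabla_x^2f^h(\xkh)\skh=-\tfrac12(\skh+e)^TB(\skh+e)+\tfrac12\,e^TBe-r_k^T\skh-\tfrac12(\skh)^TE_k\skh-\cdots,
\end{equation*}
and while the completed square has the right sign to discard, the remainder $+\tfrac12 e^TBe\ge0$ sits on the majorizing side. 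Semidefiniteness only gives $e^TBe\ge0$ (the wrong direction), and the Lipschitz hypothesis controls $\nabla_x^2f^h(\xkh)-B$, not $B$ itself, so this term can only be bounded by $\tfrac12\|B\|\,d^2$ with $d=\|e\|$ --- a constant foreign to $L_2$ and $\lambda_{\max}$. Worse, it carries no factor of $\|\skh\|$, so after dividing by $\|\skh\|^{q-1}$ you do not get a quadratic in $t$: the best the function-value route yields is $\|\skh\|=O(d^{2/(q+1)})$, which would already destroy the $q$-th order rate claimed in Theorem \ref{teo_loc}.

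The paper avoids this by never touching \eqref{den2}: it works with the vector stationarity residual of the regularized subproblem (the quantity controlled by \eqref{stopping}), writes it out via \eqref{base}, and adds $\nabla_x^2f^h(x_k^*)(x_{k+1}^h-x_k^*)$ to both sides. After this rearrangement $\nabla_x^2f^h(x_k^*)$ only ever appears in two harmless ways: inside the second-order Taylor remainder $\nabla_xf^h(x_k^*)-\nabla_xf^h(\xkh)-\nabla_x^2f^h(x_k^*)(x_k^*-\xkh)$, which \eqref{taylor3} with $q=2$ bounds by $L_2d^2$, and acting on $x_{k+1}^h-x_k^*$ on the left-hand side, where positive semidefiniteness gives the lower bound $\|(\nabla_x^2f^h(x_k^*)+\lambda_k\|\skh\|^{q-1}I)(x_{k+1}^h-x_k^*)\|\ge\lambda_k\|\skh\|^{q-1}\|x_{k+1}^h-x_k^*\|$ and lets the Hessian be dropped entirely. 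Combined with $\|x_{k+1}^h-x_k^*\|\ge\|\skh\|-d$ this produces $\lambda_k\|\skh\|^q\le L_2d^2+L_2d\|\skh\|+(\text{terms in }\theta,H,\lambda_k\|\skh\|^{q-1}d)$, whence the quadratic; the coarse-level case is identical with $R\nabla_x^2f^h(x_k^*)P\succeq0$ in place of $B$. To repair your argument you should switch from the decrease inequality to this gradient identity; as written, the "completing the square" step is where the proof fails.
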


The proof of the lemma is reported in the appendix. This lemma can be used to prove that if it exists an accumulation point of $\{\xkh\}$ that belongs to $\mathcal{X}$, then the full sequence converges to that point and that the rate of convergence depends on $q$. First, we can prove that the set of accumulation points is not empty. 

\begin{lemma}\label{teo_glob2}
	Let Assumptions \ref{ass_R} and \ref{hp_lip_grad} hold. Let $\{\xkh\}$ be the sequence of fine level iterates generated by Algorithm \ref{algo}. If $\mathcal{L}(f(\xkh))$ is bounded for some $k\geq 0$, then the sequence has an accumulation point that is a  first-order stationary point. 
\end{lemma}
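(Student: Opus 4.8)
The plan is to combine the monotone decrease of $f^h$ along the iterates, the global convergence result of Theorem \ref{teo_glob}, and a compactness argument.

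First I would note that $\{f^h(\xkh)\}$ is non-increasing: an unsuccessful iteration leaves $\xkh$ unchanged, whereas at a successful iteration the predicted reduction $m_{q,k}^h(\xkh)-m_{q,k}^h(\xkh,\skh)$ is strictly positive by \eqref{dent} (it is nonzero since the inner stopping criteria in \eqref{stopping} force a nonzero step whenever $\nabla_x f^h(\xkh)\neq 0$), so that $\rho_k\geq\eta_1>0$ gives $f^h(\xkh)-f^h(\xkh+\skh)>0$. Hence, if $\mathcal{L}(f(x_{k_0}^h))$ is bounded for some $k_0\geq 0$, then $f^h(\xkh)\leq f^h(x_{k_0}^h)$, i.e.\ $\xkh\in\mathcal{L}(f(x_{k_0}^h))$, for every $k\geq k_0$; in particular $\{\xkh\}$ is bounded.

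Next I would show that Algorithm \ref{algo} performs infinitely many successful iterations at the fine level. Were this not the case, then past the last successful iteration $\xkh$ would be constant and, by the regularization-parameter update, $\lambda_k\to\infty$; since the algorithm has not terminated, $\nabla_x f^h(\xkh)\neq 0$, so $\skh\neq 0$ and \eqref{ratio2} gives $\lvert 1-\rho_k\rvert\leq K/\lambda_k\to 0$, forcing $\rho_k\geq\eta_1$ for $k$ large enough --- a contradiction. Therefore the set of successful iterations is infinite.

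Finally, Theorem \ref{teo_glob} guarantees that $\|\nabla_x f^h(\xkh)\|\to 0$ along this infinite subsequence of successful iterations; since $\{\xkh\}$ is bounded, I would extract a convergent sub-subsequence $\xkh\to x^*$, and the continuity of $\nabla_x f^h$ then yields $\nabla_x f^h(x^*)=0$, so $x^*$ is an accumulation point of $\{\xkh\}$ that is a first-order stationary point. The only delicate point --- and the natural candidate for the main obstacle --- is the argument that the subsequence of successful iterations is infinite, so that the conclusion of Theorem \ref{teo_glob} is not vacuous and a genuine accumulation point can actually be extracted; the rest is the standard monotonicity-and-compactness reasoning.
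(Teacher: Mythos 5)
Your proof is correct and follows essentially the same route as the paper's two-line argument: monotone decrease of $f^h$ plus boundedness of the level set gives a bounded iterate sequence, hence an accumulation point, which Theorem \ref{teo_glob} together with continuity of $\nabla_x f^h$ forces to be first-order stationary. Your extra step showing that the set of successful iterations is infinite (via $\lambda_k\to\infty$ and the bound \eqref{ratio2}) is a detail the paper leaves implicit, and it is indeed needed for the conclusion of Theorem \ref{teo_glob} to be non-vacuous.
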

\begin{proof}
	As $\{f(\xkh)\}$ is a decreasing sequence, and $\mathcal{L}(f(\xkh))$ is bounded for some $k\geq 0$, $\{\xkh\}$ is a bounded sequence and it has an accumulation point. From Theorem \ref{teo_glob}, all the accumulation points are first-order stationary points. 
\end{proof}

\begin{theorem}\label{teo_loc}
	Let Assumptions \ref{ass_R} and \ref{hp_lip_grad} hold. Let $\{x_k^h\}$ be the sequence of fine level iterates generated by Algorithm \ref{algo}. Assume that $\mathcal{L}(f(x_k^h))$ is bounded for some $k\geq 0$   and that   it exists an accumulation point $x^*$ such that $x^*\in\mathcal{X}$. Then, the whole sequence $\{x_k^h\}$ converges to $x^*$ and it exist strictly positive constants $c \in \mathbb{R}$ and $\bar{k} \in \mathbb{N}$ such that:
	\begin{equation}\label{rate}
	\frac{\|x_{k+1}^h-x^*\|}{\|x_k^h-x^*\|^q}\leq c, \quad \forall k\geq\bar{k}.
	\end{equation}
\end{theorem}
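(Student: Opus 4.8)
The argument follows the trapping scheme of \cite{yue2018quadratic}, coupling the local error bound (Assumption~\ref{ass_eb}), the step/distance estimate of Lemma~\ref{lemma}, and the gradient/step relation \eqref{K1}. Throughout write $d_k=\mathrm{dist}(x_k^h,\mathcal{X})$ and note that, since $x^*\in\mathcal{X}$, one always has $d_k\le\|x_k^h-x^*\|$ (recall $q\ge 2$ in this section). The first move is to fix a radius $\bar\rho>0$ small enough that on $\mathcal{B}(x^*,\bar\rho)$: (i) Assumption~\ref{ass_eb} holds and $\nabla_x^2 f$ is Lipschitz continuous, so that Lemma~\ref{lemma} applies with the Taylor constant $C_f$; and (ii) $\kappa_R\|\nabla_x f(x)\|\le\epsilon_H$, which is possible because $\nabla_x f(x^*)=0$. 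Property (ii) together with $\|R\|\le\kappa_R$ forces the second condition in \eqref{go_down_condition} to fail throughout $\mathcal{B}(x^*,\bar\rho)$, so the recursive step is never selected there: all steps are Taylor steps, governed by \eqref{K1} and by $C_f$. Since $x^*$ is an accumulation point of $\{x_k^h\}$ (accumulation points exist by Lemma~\ref{teo_glob2}) and $\|\nabla_x f(x_k^h)\|\to 0$ along successful iterations by Theorem~\ref{teo_glob}, one may pick an index $\bar k$ for which $x_{\bar k}^h$ lies deep inside $\mathcal{B}(x^*,\bar\rho)$ with $d_{\bar k}$ as small as needed.

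On any accepted Taylor iteration, $x_{k+1}^h=x_k^h+s_k^h$, hence
\begin{equation*}
d_{k+1}\le\kappa_{EB}\|\nabla_x f(x_{k+1}^h)\|\overset{\eqref{K1}}{\le}\kappa_{EB}K_1\|s_k^h\|^q\overset{\text{Lem.~\ref{lemma}}}{\le}\kappa_{EB}K_1C_f^q\,d_k^q=:A\,d_k^q .
\end{equation*}
To close the recursion one must show that every iteration $k\ge\bar k$ is accepted. Here I would invoke, from the one-level analysis of \cite{Birgin2017}, that the inner solve also delivers a Cauchy-type model decrease $m_{q,k}^h(x_k^h)-m_{q,k}^h(x_k^h,s_k^h)\gtrsim\|\nabla_x f(x_k^h)\|\min\{\cdots\}$; near $\mathcal{X}$ the error bound gives $\|\nabla_x f(x_k^h)\|\ge d_k/\kappa_{EB}$ and $\nabla_x^2 f$ is bounded, so the predicted reduction is $\gtrsim d_k^2$, whereas the model error is $O(\|s_k^h\|^{q+1})=O(d_k^{q+1})$ by \eqref{taylor2} and Lemma~\ref{lemma}. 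Therefore $|1-\rho_k|=O(d_k^{q-1})\to 0$ as $d_k\to 0$, so $\rho_k\ge\eta_2$ once $d_{\bar k}$ is small, and the step is (very) successful. Combined with the contraction $d_{k+1}\le A d_k^q$ (which makes $d_k$ decreasing as soon as $d_{\bar k}^{q-1}\le 1/(2A)$) and with $\|s_k^h\|\le C_f d_k$, an induction shows that the iterates never leave $\mathcal{B}(x^*,\bar\rho)$, so the whole tail consists of successful Taylor steps with $d_k\to 0$ at $q$-th order.

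Convergence and the rate then follow by summation. From $\|x_{k+1}^h-x_k^h\|=\|s_k^h\|\le C_f d_k$ and the super-linear decay of $d_k$, the series $\sum_k\|s_k^h\|$ converges, so $\{x_k^h\}$ is Cauchy; its limit must be the accumulation point $x^*$. Iterating $d_{k+1}\le A d_k^q$ shows the tail $\sum_{j\ge k+1}d_j$ is dominated by its first term, $\sum_{j\ge k+1}d_j\le 2 d_{k+1}$, once $d_{\bar k}$ is small, so for $k\ge\bar k$
\begin{equation*}
\|x_{k+1}^h-x^*\|\le\sum_{j\ge k+1}\|s_j^h\|\le C_f\sum_{j\ge k+1}d_j\le 2C_f d_{k+1}\le 2C_f A\,d_k^q\le 2C_f A\,\|x_k^h-x^*\|^q ,
\end{equation*}
which is \eqref{rate} with $c=2C_fA$.

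The main obstacle is the second step: with only an error bound, rather than a non-degeneracy assumption, one cannot directly control a single limit point, so proving that the iterates stay trapped near $x^*$ and that unsuccessful iterations eventually cease requires carefully coupling the summability of the steps, the $q$-order contraction, and the regularization-parameter dynamics of Algorithm~\ref{algo}; in particular, establishing $|1-\rho_k|\to 0$ rests on a Cauchy-type lower bound on the predicted reduction that is not among the relations recalled above and must be imported from the one-level theory.
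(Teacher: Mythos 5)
Your proposal follows the same skeleton as the paper's proof: combine the error bound of Assumption \ref{ass_eb}, the step/distance estimate of Lemma \ref{lemma} and the gradient/step relation to obtain $\mathrm{dist}(x_{k+1}^h,\mathcal{X})\leq \bar C\,\mathrm{dist}^q(x_k^h,\mathcal{X})$, then sum the geometrically dominated distances to conclude that the sequence is Cauchy and to read off the rate, exactly as in \cite{yue2018quadratic}. One genuine difference: you dispose of the recursive branch by noting that $\|R\nabla_x f^h(x_k^h)\|\leq\kappa_R\|\nabla_x f^h(x_k^h)\|\to 0$ eventually violates the fixed threshold $\|R\nabla_x f^h(x_k^h)\|>\epsilon_H$ in \eqref{go_down_condition}, so that only Taylor steps occur near $x^*$. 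This is a correct and legitimate simplification. The paper instead keeps both branches and derives the same contraction in the lower-level case from \eqref{go_down_condition}, \eqref{K2} and the coarse constant of \eqref{step_error_coarse}; its argument therefore does not lean on $\epsilon_H$ being bounded away from zero, but the two routes are otherwise equivalent.

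The one weak point is your attempt to prove that every iteration $k\geq\bar k$ is successful. The estimate $\lvert 1-\rho_k\rvert=O(\mathrm{dist}(x_k^h,\mathcal{X})^{q-1})$ requires a Cauchy-type lower bound of order $\mathrm{dist}(x_k^h,\mathcal{X})^2$ on the predicted reduction, but the algorithm only guarantees the decrease \eqref{dent}, namely a predicted reduction of at least $\frac{\lambda_k}{q+1}\|s_k^h\|^{q+1}$, which together with \eqref{taylor2} yields only $\lvert 1-\rho_k\rvert\leq K/\lambda_k$ --- a quantity that does not tend to zero. As you yourself flag, this ingredient would have to be imported as an additional assumption on the inner solver; it is not available in the stated framework. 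It is fair to add, however, that the paper's own proof simply does not address unsuccessful iterations: the contraction is applied as though $x_{k+1}^h=x_k^h+s_k^h$ for every $k\geq k_1$, and on an unsuccessful iteration (where $x_{k+1}^h=x_k^h$) the inequality $\mathrm{dist}(x_{k+1}^h,\mathcal{X})\leq\bar C\,\mathrm{dist}^q(x_k^h,\mathcal{X})$ is false once the distance is small. So you have identified a real issue that the published argument glosses over, even though the repair you sketch cannot be carried out with the tools the paper provides.
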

\begin{proof}
	As $x^*$ is an accumulation point, we have that $\lim\limits_{k\rightarrow\infty}\mathrm{dist}(x_k^h,\mathcal{X})=0$. Then, it exist $\rho$ and $k_1$ such that $x_k^h\in \mathcal{N}(\mathcal{X},\rho)$ for all $k\geq k_1$. Therefore, from Assumption \ref{ass_eb} it holds
	\begin{equation}\label{rel_eb}
	\mathrm{dist}(x_k^h,\mathcal{X})\leq \kappa_{EB} \, \|\nabla_x f^h(x_k^h)\|, \quad \forall k\geq k_1.
	\end{equation}
	Moreover, from Remark \ref{remark}, $\nabla_x^2 f$ is locally Lipschitz continuous, so Lemma \ref{lemma} applies to all $k\geq k_1$. 
	
	Let us first consider the case in which the Taylor model is employed. 
	It follows from \eqref{rel_eb}, \eqref{K1} and \eqref{step_error} that for all $k\geq k_1$
	\begin{align*}
	\mathrm{dist}(x_{k+1}^h,\mathcal{X})\leq& \kappa_{EB} \, \|\nabla_x f^h(x_{k+1}^h)\|\leq \kappa_{EB} \, K_1 \, \|s_k^h\|^q\leq \kappa_{EB} \, K_1 \, C_f^q \, \mathrm{dist}^q(x_k^h,\mathcal{X}).
	\end{align*} 
	If the lower level model is employed,  from \eqref{rel_eb}, \eqref{go_down_condition}, \eqref{K2} and \eqref{step_error_coarse} it follows that for all $k\geq k_1$
	\begin{align*}
	\mathrm{dist}(x_{k+1}^h,\mathcal{X})\leq& \kappa_{EB} \, \|\nabla_x f^h(x_{k+1}^h)\| \leq  \kappa_{EB} \, \kappa_H \, \|R\nabla_x f^h(x_{k+1}^h)\|\\
	\leq & \kappa_{EB} \, \kappa_H \, K_2 \, \|s_k^H\|^q\leq  \kappa_{EB} \, \kappa_H \, K_2 \,  C_c^q \, \mathrm{dist}^q(x_k^h,\mathcal{X}).
	\end{align*}
	Then in both cases, it exists $\bar{C}$ such that 
	\begin{equation*}
	\mathrm{dist}(x_{k+1}^h,\mathcal{X})\leq \bar{C} \, \mathrm{dist}^q(x_k^h,\mathcal{X}), \quad \forall k\geq k_1,
	\end{equation*}
	where
	\begin{equation*}
	 \bar{C}=\begin{cases}\kappa_{EB} K_1 \, C_f^q & \text{(Taylor model)},\\
	 \kappa_{EB}\kappa_H \, K_2 \, C_c^q &\text{(lower level model)}.
	\end{cases}
	\end{equation*}
	With this result, we can prove the convergence of $\{x_k^h\}$ with standard arguments. We repeat for example the arguments of the proof of \cite[Theorem2]{yue2018quadratic} for convenience.  Let $\eta>0$ be an arbitrary value. As $\lim_{k\rightarrow\infty}\mathrm{dist}(x_k^h,\mathcal{X})=0$, it exists $k_2\geq 0$ such that
	\begin{equation*}
	\mathrm{dist}(x_k^h,\mathcal{X})\leq\min\Big\{\frac{1}{2\bar{C}},\frac{\eta}{2C}\Big\}, \quad \forall k\geq k_2.
	\end{equation*}
	Then,
	\begin{equation*}
	\mathrm{dist}(x_{k+1}^h,\mathcal{X})\leq\bar{C}\mathrm{dist}^q(x_{k}^h,\mathcal{X})\leq \frac{1}{2}\mathrm{dist}(x_{k}^h,\mathcal{X}),\quad \forall k\geq \bar{k}=\max\{k_1,k_2\}.
	\end{equation*}
	From \eqref{step_error}, it then holds for all $k\geq \bar{k}$ and $j\geq 0$:
	\begin{align*}
	\|x_{k+j}^h-x_k^h\|&\leq\sum_{i=k}^{\infty}\|x_{i+1}^h-x_i^h\|\leq \sum_{i=k}^{\infty}C\mathrm{dist}(x_i^h,\mathcal{X})\\&\leq C\mathrm{dist}(x_k^h,\mathcal{X})\sum_{i=0}^{\infty}\frac{1}{2^i}
	\leq 2C\mathrm{dist}(\xkh,\mathcal{X})\leq\eta,
	\end{align*}
	i.e. that $\{x_k^h\}_{k\geq \bar{k}}$ is a Cauchy sequence and so the whole sequence is convergent.
	Finally we establish the $q$-th order rate of convergence of the sequence. For any $k\geq \bar{k}$, 
	\begin{equation}
	\|x^*-x_{k+1}^h\|=\lim_{j\rightarrow\infty}\|x_{k+j+1}^h-x_{k+1}^h\|\leq 2C\mathrm{dist}(x_{k+1}^h,\mathcal{X})\leq 2C\bar{C}\mathrm{dist}^q(x_k^h,\mathcal{X}).
	\end{equation}
	Combining this with $\mathrm{dist}(x_k^h,\mathcal{X})\leq\|x_k^h-x^*\|$, and setting $c=2C\bar{C}$ we obtain the thesis \eqref{rate}.
	
	Therefore $\{x_k^h\}$ converges at least with order $q$ to $x^*$.
	\end{proof}

	\section{Numerical results}\label{sec_numerical_results}
In this section, we report on the practical performance of a method in the family. 
	 
	We have implemented the method corresponding to $q=2$ in Algorithm \ref{algo} in Julia \cite{beks:17} (version 0.6.1). This is a multilevel extension of the method AR$2$ in Algorithm \ref{algo0}, which is better known as  ARC \cite{arc,arc2}. We will therefore denote the implemented multilevel method as MARC (multilevel adaptive method based on cubic regularization), rather than MAR$2$. 
	
	We consider the following two-dimensional nonlinear problem in the unit square domain $S_2$:
	\begin{equation*}
	\begin{cases}
	-\Delta u(x, y)+e^{u(x,y)} = g(x,y) & \text{in}\; S_2 ,\\
	u(x, y) = 0\quad &\text{on} \;\partial S_2,
	\end{cases}
	\end{equation*}	
	where $g$ is obtained such that the analytical solution to this problem is given by
$$	u(x, y) = \sin(2\pi x(1 - x)) \sin(2\pi y(1-y)).$$
	The negative Laplacian operator is discretized using finite difference, giving a symmetric positive definite matrix $A$,  that also takes into account the boundary conditions.
	The discretized version of the problem is then a system of the form $Au+e^u=g$, where $u$, $g$, $e^u$ are vectors in $\mathbb{R}^{n_h}$, in which the columns of matrices $U$, $G$, $E$ are stacked, with $U_{i,j}=u(x_i,y_j)$, $G_{i,j}=g(x_i,y_j)$, $E_{i,j}=e^{u(x_i,y_j)}$, for $x_i,y_j$ grid points, $i,j=1,\dots,\sqrt{n_h}$.
	
	 The MARC algorithm is then used on the nonlinear minimization problem
	\begin{equation}\label{pb_opti}
	\min_{u\in \mathbb{R}^{n_h}} \frac{1}{2} u^TAu+\|e^{u/2}\|^2-g^Tu,
	\end{equation}
	which is equivalent to the system $Au+e^u=g$. The coarse approximations to the objective function arise from a coarser discretization of the problem. Each coarse two-dimensional grid has a dimension that is four times lower than the dimension of the grid on the corresponding upper level.
	
		The prolongation operators $P_\ell$ from level $\ell-1$ to $\ell$ are based on the nine-point interpolation scheme defined by the stencil
		$
		\begin{pmatrix}
		\frac{1}{4}\; \frac{1}{2}\; \frac{1}{4}\\
		\frac{1}{2} \;1 \;\frac{1}{2}\\
		\frac{1}{4}\; \frac{1}{2} \;\frac{1}{4}
		\end{pmatrix}
		$ 
		and the full weighting operators defined by $R_\ell=\frac{1}{4}P_\ell^T$ are used as restriction operators \cite{briggs}.
	
 We compare the one-level ARC with MARC. Parameters common to both methods are set as: $\epsilon^{l_{\max}}=10^{-7}$, $\gamma_1=0.85$, $\gamma_2=0.5$, $\gamma_3=2$, $\lambda_0=0.05$ $\eta_1=0.1$, $\eta_2=0.75$. For MARC we set  $\kappa=0.1$ and $\epsilon^\ell=\epsilon^{\ell_{\max}}$ for all $\ell$. 
 
 At each iteration we find an approximate minimizer of the cubic models as described in \cite[\S 6.2]{arc}. This requires a sequence of Cholesky factorizations, which represents the dominant cost per nonlinear iteration. We measure the performance of the methods in terms of total number of floating point iterations required for these factorizations. 

 We study the effect of the multilevel strategy on the convergence of the method for problems of fixed dimension $n_h$.
  We then consider the solution of problem \eqref{pb_opti} using two different discretizations with $n_h=4096$ (Table \ref{tab_1}) and $n_h=16384$ (Table \ref{tab_2}), respectively. We allow $4$ levels in MARC.
 We report the results of the average of ten simulations with different random initial guesses of the form $u_0=a\,\mathrm{rand}(n_h,1)$,  for different values of $a$.  In each simulation the random starting guess is the same for the two considered methods. All the quantities reported in Tables \ref{tab_1} and \ref{tab_2} are the average of the values obtained over the ten simulations. $it_T$ denotes the number of total iterations, $it_f$ denotes the number of iterations in which the Taylor  model has been used, ${\tt RMSE}$ is the root-mean square error with respect to the true solution and {\tt save} is the ratio between the total number of floating point operations required for the Cholesky factorizations by ARC and MARC, respectively. For the {\tt save} quantity, we report three values: the minimum, the average and the maximum value obtained over the ten simulations. 
 
 The results reported in Tables \ref{tab_1} and \ref{tab_2} confirm the relevance of MARC as compared to ARC. The numerical experiments highlight the different convergence properties of both algorithms. The use of MARC is especially convenient when the initial guess is not so close to the true solution. Indeed,  the performance of ARC deteriorates as the distance of the initial guess from the  true solution increases,  while MARC seems to be much less sensible to this choice. For the problem of smaller dimension, ARC still manages to find a solution for further initial guesses, even if this requires an higher number of iterations, while for the problem of larger dimension the method fails to find a solution in feasible time. The new multilevel approach is found to lead to considerable computational savings in terms of floating point operations compared to the classical one-level strategy.

  \begin{table}
  	\centering
  	\begin{tabular}{c|c|ccc}
  	&	 &&$n_h=4096$ &\\
  $u_0$	&	Method 	 & $it_T/it_f$& {\tt RMSE} &{\tt save}\\
  		\hline
  		$\bar{u}_1$ 	&	ARC	&6/6  & $10^{-4}$ & \\  
  		&	MARC	& 9/4& $10^{-4}$ &1.7-2.0-2.3 \\
  		\hline
$\bar{u}_2$  	&	ARC	&17/17 & $10^{-4}$ &\\  
  	&	MARC	& 10/3& $10^{-4}$ &1.9-5.8-8.3 \\
  	\end{tabular}
  	\caption{Solution of the minimization problem (\ref{pb_opti}) with the one level ARC method and a four level ARC (MARC) (case of $n_h=4096$) with $\bar{u}_1=1\,\mathrm{rand}(n_h,1)$, $\bar{u}_2=3\,\mathrm{rand}(n_h,1)$. $it_T$ denotes the average number of iterations over ten simulations, $it_f$ the average number of iterations in which the fine level model has been used, ${\tt RMSE}$ the root-mean square error with respect to the true solution  and {\tt save} the ratio between the total number of floating point operations required for the Cholesky factorizations in ARC and MARC, respectively.}
  	\label{tab_1}
  \end{table}
  
  \begin{table}
  	\centering
  	\begin{tabular}{c|c|ccc}
  		&	 & &$n_h=16384$&\\
  		$u_0$	&	Method 	 & $it_T/it_f$& {\tt RMSE} &{\tt save}  \\ 
  		\hline
  		$\bar{u}_1$ 	&	ARC	& 6/6& $10^{-5}$& \\  
  		&	MARC	&12/3&$10^{-5}$ & 1.5-2.0-2.5\\
  		\hline
  		$\bar{u}_3$  	&	ARC	  &FAIL  & FAIL& \\  
  		&	MARC	& 18/5&$10^{-5}$ & -\\
  	\end{tabular}
  	\caption{Solution of the minimization problem (\ref{pb_opti}) with the one level ARC method and a four level ARC (MARC) (case of $n_h=16384$) with $\bar{u}_1=1\,\mathrm{rand}(n_h,1)$, $\bar{u}_3=6\,\mathrm{rand}(n_h,1)$. $it_T$ denotes the average number of iterations over ten simulations, $it_f$ the average number of iterations in which the fine level model has been used, ${\tt RMSE}$ the root-mean square error with respect to the true solution  and {\tt save} the ratio between the total number of floating point operations required for the Cholesky factorizations in ARC and MARC, respectively.}
  	\label{tab_2}
  \end{table}


	\section{Conclusions}\label{sec_conclusion}
	
	We have introduced a family of multilevel methods of order $q\geq 1$ for unconstrained minimization. These methods represent an extension of the higher-order methods presented in \cite{Birgin2017} and of the multilevel trust-region method proposed in \cite{rmtr}. We have proposed a unifying framework to analyse these methods, which is useful to prove their convergence properties and evaluate their worst-case complexity to reach first-order stationary points. As expected, we show that the local rate of convergence and the complexity bound depend on $q$ and high values of $q$ allow both fast local convergence and lower complexity bounds.
	
	 We believe this represents a contribution in the optimization field, as the use of multilevel ideas allows to reduce the major cost per iteration of the high-order methods. This gives a first answer to  the question posed in \cite{Birgin2017} about whether the approach presented there can have practical implications, in applications for which computing $q$ derivatives is feasible.
	 
	  We have implemented the multilevel method corresponding to $q=2$ and presented numerical results that show the considerable benefits of the multilevel strategy in terms of savings in floating point operations. Additional numerical results can be found in \cite{multilevel_ann}, where the authors apply the multilevel method in the family corresponding to $q=1$ to problems arising in the training of artificial neural networks for the approximate solution of partial differential equations. This case is particularly interesting as it allows to show the efficiency of multilevel methods even for problems without an underlying geometrical structure.

	
	%
\bibliographystyle{plain}
\bibliography{biblio_paper_arxiv}

\appendix
\section{Proof of Lemma 4.5 } 
In this appendix we report the proof of Lemma \ref{lemma}. We restate it here for convenience of the reader.

\begin{lemma*}[Lemma 4.5]
	Let Assumptions \ref{ass_R} and \ref{hp_lip_grad} hold. Let $\{\xkh\}$ be the sequence generated by Algorithm \ref{algo} and $x_k^*$ be a projection point of $\xkh$ onto $\mathcal{X}$. Assume that it exists a strictly positive constant $\underline{\rho}$ such that $\{\xkh\}\in \mathcal{B}(x_k^*,\underline{\rho})$ and that $\nabla_x^2f$ is Lipschitz continuous in $\mathcal{B}(x_k^*,\underline{\rho})$ with Lipschitz constant $L_2$. Then, it holds:
	\begin{equation*}
	\|\skh\|\leq C \,\mathrm{dist}(\xkh,\mathcal{X}),
	\end{equation*}
	with
	\begin{align*}
	C=	\begin{cases}
	C_f= \displaystyle \frac{1}{2\lambda_{\max}}\left[L_2+\sqrt{L_2^2+4L_2\lambda_{\max}}\right],& \text{ (Taylor model)},\\
	\kappa_RC_c=\displaystyle \frac{\kappa_R}{2\lambda_{\max}}\left[\kappa_RL_2+\sqrt{\kappa_R^2L_2^2+4L_2\kappa_R\lambda_{\max}}\right], &\text{(lower level model)},
	\end{cases}
	\end{align*}
	with $\lambda_{\max}$ and $\kappa_R$ defined respectively in \eqref{lambda_max} and Assumption \ref{ass_R}.
\end{lemma*}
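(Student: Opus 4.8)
The plan is to exploit the stopping criterion of the inner minimization together with the upper bound $\lambda_{\max}$ on the regularization parameter, and to relate the gradient of the regularized model at the step to the true gradient of $f$ at $x_k^*$ (which vanishes, since $x_k^*\in\mathcal{X}$). First I would treat the Taylor-model case. At iteration $k$ the step $s_k^h$ satisfies the stopping condition $\|\nabla_s T_{q,k}^h(\xkh,s_k^h)+\lambda_k\|\skh\|^{q-1}\skh\|\leq\theta\|s_k^h\|^q$, but for the purpose of this lemma the cleanest route is to use instead the fact that the regularized model value has decreased, i.e.\ relation \eqref{den2}; combining this with the Cauchy-type decrease and with $q=2$-like manipulations of the cubic (here, $(q+1)$-order) regularization, one gets a quadratic-in-$\|s_k^h\|$ inequality. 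Concretely, since $T_{q,k}^h$ is the Taylor expansion of $f^h$ at $\xkh$ and $\nabla_x f^h(x_k^*)=0$, I would Taylor-expand $\nabla_s T_{q,k}^h(\xkh,s_k^h)$ around $x_k^*$ using Lipschitz continuity of $\nabla_x^2 f^h$ with constant $L_2$ on $\mathcal{B}(x_k^*,\underline{\rho})$, obtaining a bound of the form $\|\nabla_s T_{q,k}^h(\xkh,s_k^h)\|\leq L_2\,\mathrm{dist}(\xkh,\mathcal{X})\,\|s_k^h\|+\tfrac{1}{2}L_2\|s_k^h\|^2$ plus the regularization term $\lambda_k\|s_k^h\|^q$; feeding this into the first-order optimality-type bound and using $\lambda_k\leq\lambda_{\max}$ yields $\lambda_{\max}\|s_k^h\|^2\leq L_2\,\mathrm{dist}(\xkh,\mathcal{X})\,\|s_k^h\|+\tfrac12 L_2\|s_k^h\|^2$ (modulo the exact constants). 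Solving this quadratic inequality for $\|s_k^h\|/\mathrm{dist}(\xkh,\mathcal{X})$ gives exactly the constant $C_f=\tfrac{1}{2\lambda_{\max}}\left[L_2+\sqrt{L_2^2+4L_2\lambda_{\max}}\right]$.

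For the lower-level case I would run the same argument on the coarse model $m_{q,k}^H$. The key observation is that, by construction \eqref{def_mathcalR}, the derivatives of $m_{q,k}^H$ at $x_{0,k}^H=R\xkh$ coincide (in the appropriate projected sense) with the restricted derivatives of $f^h$ at $\xkh$, so the gradient $\nabla_s m_{q,k}^H(x_{0,k}^H,s_k^H)$ can be expanded around $x_k^*$ just as before, now with factors of $\|R\|\leq\kappa_R$ and $\|P\|\leq\kappa_R$ appearing each time a quantity is pushed through $R$ or $P$. This produces the same quadratic inequality but with $L_2$ replaced throughout by $\kappa_R L_2$ in the appropriate places, giving $\|s_k^H\|\leq C_c\,\mathrm{dist}(\xkh,\mathcal{X})$ with $C_c=\tfrac{1}{2\lambda_{\max}}\left[\kappa_R L_2+\sqrt{\kappa_R^2 L_2^2+4 L_2\kappa_R\lambda_{\max}}\right]$. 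Since the fine-level step is $s_k^h=P s_k^H$ and $\|P\|\leq\kappa_R$, we get $\|s_k^h\|\leq\kappa_R C_c\,\mathrm{dist}(\xkh,\mathcal{X})$, which is the claimed constant $\kappa_R C_c$; along the way this also records the bound $\|s_k^H\|\leq C_c\,\mathrm{dist}(\xkh,\mathcal{X})$ (the inequality referred to as \eqref{step_error_coarse} in the local-convergence proof).

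I expect the main obstacle to be bookkeeping the $q$-dependence cleanly: the regularization term is $\lambda_k\|s\|^{q-1}s$ of order $q$, but the quadratic (degree-two) structure that produces the stated $C_f$ comes specifically from comparing against the \emph{second-order} Taylor behaviour of $f$ near a second-order critical point (where $\nabla_x^2 f\succeq 0$ and $\nabla_x f=0$), with the Lipschitz constant $L_2$ of the Hessian controlling the error. So the delicate point is isolating the quadratic term $\lambda_{\max}\|s\|^2$ on one side — this should come from a lower bound on the model decrease $m_{q,k}^h(\xkh)-m_{q,k}^h(\xkh,s_k^h)\geq\tfrac{\lambda_k}{q+1}\|s\|^{q+1}$ combined with an \emph{upper} bound on the same decrease in terms of the Hessian-Lipschitz expansion, rather than from the gradient stopping test directly; getting the two sides to meet in a genuinely quadratic inequality (so that the square-root formula for $C_f$ emerges) is the crux. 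The positive-semidefiniteness of $\nabla_x^2 f(x_k^*)$ is what kills the otherwise-present negative curvature term and makes this possible. Everything else — the Taylor expansions, the triangle inequalities, and the propagation of $\kappa_R$ through $R$ and $P$ — is routine.
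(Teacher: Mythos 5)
Your high-level plan --- use the (approximate) stationarity of the regularized model, the Lipschitz continuity of $\nabla_x^2 f$ near $x_k^*$, the positive semidefiniteness of $\nabla_x^2 f(x_k^*)$, reduce to a quadratic inequality in $\|\skh\|$, and handle the coarse case by the same argument plus $\skh=Ps_k^H$ with $\|P\|\le\kappa_R$ --- is indeed the paper's skeleton. But the concrete derivation you sketch has a genuine gap. The claimed bound $\|\nabla_s T_{q,k}^h(\xkh,\skh)\|\le L_2\,\mathrm{dist}(\xkh,\mathcal{X})\,\|\skh\|+\tfrac12 L_2\|\skh\|^2$ is false: writing $\nabla_s T_{q,k}^h(\xkh,\skh)=\nabla_x f^h(\xkh)+\nabla_x^2 f^h(\xkh)\skh+\cdots$ and expanding about $x_k^*$ leaves the term $\nabla_x^2 f^h(x_k^*)(x_{k+1}^h-x_k^*)$, whose size is governed by $\|\nabla_x^2 f^h(x_k^*)\|$ and cannot be absorbed into $L_2$-terms (take $f$ quadratic: $L_2=0$ but the Hessian may be arbitrarily large). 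Consequently the quadratic inequality you write, $\lambda_{\max}\|\skh\|^2\le L_2 d\|\skh\|+\tfrac12 L_2\|\skh\|^2$ with $d=\mathrm{dist}(\xkh,\mathcal{X})$, is not the right one: it has no $d^2$ term, and its solution is $\|\skh\|\le L_2 d/(\lambda_{\max}-L_2/2)$, not $C_f\, d$. The stated $C_f$ is the positive root of $\lambda t^2-L_2 d\,t-L_2 d^2=0$, so you need a constant term of order $d^2$ (coming from the gradient Taylor remainder $\|\nabla_x f^h(x_k^*)-\nabla_x f^h(\xkh)-\nabla_x^2 f^h(x_k^*)(x_k^*-\xkh)\|\le L_2 d^2$), a linear term $L_2 d\,t$ (coming from $\|(\nabla_x^2 f^h(x_k^*)-\nabla_x^2 f^h(\xkh))\skh\|$), and no leftover $t^2$ term on the right.

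The missing idea is the paper's rearrangement: add $\nabla_x^2 f^h(x_k^*)(x_{k+1}^h-x_k^*)$ to both sides of the stationarity relation so that the left-hand side becomes $\bigl(\nabla_x^2 f^h(x_k^*)+\lambda_k\|\skh\|^{q-1}I\bigr)(x_{k+1}^h-x_k^*)$. Positive semidefiniteness is then used to \emph{lower}-bound the norm of this by $\lambda_k\|\skh\|^{q-1}\|x_{k+1}^h-x_k^*\|$, which removes the uncontrollable Hessian term entirely, while every remaining right-hand term is controlled by $\theta$ (via the stopping test \eqref{stopping}), by $L_2$, or by $\lambda_k\|\skh\|^{q-1}d$; a reverse triangle inequality $\|x_{k+1}^h-x_k^*\|\ge\|\skh\|-d$ then yields the correct quadratic. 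Note also that the paper uses the gradient stopping test \eqref{stopping} directly, whereas your suggested alternative via the model decrease \eqref{den2} hits the same obstruction: the upper bound on the decrease leaves a term $(x_k^h-x_k^*)^T\nabla_x^2 f^h(x_k^*)(x_k^h-x_k^*)$ that is not $L_2$-controlled, so it cannot produce the stated constant. The coarse-level half of your sketch (same argument on $m_{q,k}^H$, $\kappa_R$ factors from $R$ and $P$, then $\|\skh\|\le\kappa_R\|s_k^H\|$) does match the paper, but as written it inherits the same gap.
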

	\begin{proof}
		The proof is divided into two parts. We first consider the case in which $\skh$ has been obtained from the approximate minimization of the Taylor model, and then the case in which it has been obtained as prolongation of the step obtained from the approximate minimization of the coarse model. 
		
		Le us then assume that the Taylor model has been employed. Reminding that $\nabla_x f^h(x^*)=0$ for each $x^*\in\mathcal{X}$, and definition \eqref{taylor_model} we obtain:
		\begin{align}
		\nabla_s \left(m_{q,k}^h(\xkh,s_k^h)+\frac{\lambda_k}{q+1}\|\skh\|^{q+1}\right)=&-\nabla_x f^h(x_k^*)+\nabla_x f^h(\xkh)+\nabla_x^2 f^h(\xkh)s_k^h\nonumber\\&+H(s_k^h)+\lambda_k\|s_k^h\|^{q-1}s_k^h\label{base},
		\end{align}
		with 
		\begin{equation*}
		H(s_k^h)=\sum_{i=3}^{q}\frac{1}{(i-1)!}\nabla^i_x f^h(\xkh,\underbrace{\skh,\dots,\skh}_{i-1\,\mathrm{times}}).
		\end{equation*}
		Some algebraic manipulations (adding $\nabla_x^2 f^h(x_k^*)(x_{k+1}^h-x_k^*)$ to both sides of \eqref{base} and expressing $(x_{k+1}^h-x_k^*)=s_k^h+(x_k^h-x_k^*)$) lead to:
		\begin{align*}
		&\left(\nabla_x^2 f^h(x_k^*)+\lambda_k\|s_k^h\|^{q-1}\right)(x_{k+1}^h-x_k^*)=\\
		&\nabla_s \left(m_{q,k}^h(\xkh, \skh)+\frac{\lambda_k}{q+1}\|s_k^h\|^{q+1}\right)+\nabla_x f^h(x_k^*)-\nabla_x f^h(\xkh)-\nabla_x^2 f^h(x_k^*)(x_k^*-\xkh)\\
		&-H(s_k^h)+(\nabla_x^2 f^h(x_k^*)-\nabla_x^2 f^h(\xkh))s_k^h-\lambda_k\|s_k^h\|^{q-1}(x_k^*-\xkh).
		\end{align*}
		Using the fact that $\nabla_x^2 f^h(x_k^*)\succeq 0$, the stopping criterion \eqref{stopping}, and the triangle inequality, it follows
		\begin{align*}
		\lambda_k\|s_k^h\|^{q-1}\|x_{k+1}^h-x_k^*\|\leq & \, \theta\|s_k^h\|^q+
		\|\nabla_x f^h(x_k^*)-\nabla_x f^h(\xkh)-\nabla_x^2 f^h(x_k^*)(x_k^*-\xkh)\|+\\
		&\|H(s_k^h)\|+\|\nabla_x^2 f^h(x_k^*)-\nabla_x^2 f^h(\xkh)\|\|s_k^h\|+\lambda_k\|s_k^h\|^{q-1}\|x_k^*-\xkh\|.
		\end{align*}
		Using the Lipschitz continuity of $\nabla_x^2 f$ in $\mathcal{B}(x_k^*,\underline{\rho})$, the relation \eqref{taylor3} with $q=2$ and the triangle inequality $\|x_{k+1}^h-x_k^*\|\geq\|x_{k+1}^h-\xkh\|-\|\xkh-x_k^*\|=\|\skh\|-\|\xkh-x_k^*\|$, we obtain:
		\begin{align*}
		\lambda_k\|s_k^h\|^q\leq& \, \theta\|\skh\|^q+
		L_2\|\xkh-x_k^*\|^2+\|H(s_k^h)\|+L_2\|x_k^*-\xkh\|\|s_k^h\|+2\lambda_k\|s_k^h\|^{q-1}\|x_k^*-\xkh\|.
		\end{align*}
		Notice that 
		\begin{align*}
		&
		\theta\|\skh\|^q+L_2\|\xkh-x_k^*\|^2+\|H(s_k^h)\|+L_2\|x_k^*-\xkh\|\|s_k^h\|+2\lambda_k\|s_k^h\|^{q-1}\|x_k^*-\xkh\|\\
		&\geq
		L_2\|\xkh-x_k^*\|^2+L_2\|x_k^*-\xkh\|\|s_k^h\|.
		\end{align*}
		We can then study when the inequality holds
		\begin{equation*}
		\lambda_k\|\skh\|^q\leq L_2\|\xkh-x_k^*\|^2+L_2\|x_k^*-\xkh\|\|s_k^h\|.
		\end{equation*}
		The right hand side of the inequality is expressed as a polynomial of $\|s_k^h\|$ of order $1$ with positive value in 0, so the inequality will be true if $\|s_k^h\|$ is small enough. We can then assume $\|s_k^h\|<1$, so that $\|s_k^h\|^q\leq\|s_k^h\|^2$ if $q\geq2$. Then, we have that 
		\begin{equation*}
		\lambda_k\|s_k^h\|^q\leq\lambda_k\|s_k^h\|^2.
		\end{equation*}
		We can then solve
		\begin{align*}
		L_2\|\xkh-x_k^*\|^2+L_2\|x_k^*-\xkh\|\|s_k^h\|-\lambda_k\|\skh\|^2&\geq 0.
		\end{align*}
		The solution leads to
		\begin{equation}\label{step_error_fine}
		\|\skh\|\leq C_f\|\xkh-x_k^*\| , \quad
		C_f=\frac{1}{2\lambda_k}\left[L_2+\sqrt{L_2^2+4L_2\lambda_k}\right].
		\end{equation}

		Let us now consider the case in which the lower level model is used. The idea is similar as in the previous case. Reminding \eqref{gradient_lower_model} and that $R\nabla_x f^h(x_k^*)=0$, we have:
		\begin{align*}
		\nabla_s \left(m^H_k(x_{0,k}^H,s_k^H)+\frac{\lambda_k}{q+1}\|s_k^H\|^{q+1}\right)=&\nabla_x f^H(x_{0,k}^H+s_k^H)-\nabla_s T_{q,k}^H(x_{0,k}^H,s_k^H)+R\nabla_x f^h(x_k^*)+\\
		&\sum_{i=1}^{q}\frac{1}{(i-1)!}\mathcal{R}(\nabla^i_x f^h(x_k^h))\underbrace{(s_k^H,\dots,s_k^H)}_{i-1\,\mathrm{times}}+\lambda_k\|s_k^H\|^{q-1}s_k^H.
		\end{align*}
		Algebraic manipulations (adding $R\nabla_x^2 f^h(x_k^*)(x_{k+1}^h-x_k^*)$ to both sides and expressing $(x_{k+1}^h-x_k^*)=s_k^h+(x_{k}^h-x_k^*)$) lead to:
		\begin{align*}
		R\nabla_x^2 f^h(x_k^*)&(x_{k+1}^h-x_k^*)=\nabla_s\left(m^H_k(x_{0,k}^H,s_k^H)+\frac{\lambda_k}{q+1}\|s_k^H\|^{q+1}\right)-\nabla_x f^H(x_{0,k}^H+s_k^H)\\&+\nabla_s T_{q,k}^H(x_{0,k}^H, s_k^H)+R\nabla_x f^h(x_k^*)-R\nabla_x f^h(\xkh)-R\nabla_x^2 f^h(x_k^*)(x_k^*-\xkh)\\
		&-H_H(s_k^H)+R(\nabla_x^2 f^h(x_k^*)-\nabla_x^2 f^h(\xkh))s_k^h-\lambda_k\|s_k^H\|^{q-1}s_k^H,
		\end{align*}
		where
		\begin{equation*}
		H_H(s_k^H)=\sum_{i=3}^{q}\frac{1}{(i-1)!}\left[\mathcal{R}(\nabla^i_x f^h(x_k^H))\right](\underbrace{s_k^H,\dots,s_k^H}_{i-1\,\mathrm{times}}).
		\end{equation*}
		Further, we can write $R\nabla_x^2 f^h(x_k^*)(x_{k+1}^h-x_k^*)=R\nabla_x^2 f^h(x_k^*)(x_{k+1}^h-x_k^h)+R\nabla_x^2 f^h(x_k^*)(x_{k}^h-x_k^*)=R\nabla_x^2 f^h(x_k^*)Ps_k^H+R\nabla_x^2 f^h(x_k^*)(x_{k}^h-x_k^*)$:
		\begin{align*}
		(R\nabla_x^2 f^h(x_k^*)P+&\lambda_k\|s_k^H\|^{q-1})s_k^H=\nabla_s\left( m_{q,k}^H(x_k^H, s_k^H)+\frac{\lambda_k}{q+1}\|s_k^H\|^{q}\right)-\nabla_x f^H(x_{0,k}^H+s_k^H)\\&+\nabla_s T_{q,k}^H(x_{0,k}^H, s_k^H)
		+R\nabla_x f^h(x_k^*)-R\nabla_x f^h(\xkh)-R\nabla_x^2 f^h(x_k^*)(x_k^*-\xkh)\\
		&-H_H(s_k^H)+R(\nabla_x^2 f^h(x_k^*)-\nabla_x^2 f^h(\xkh))Ps_k^H-R\nabla_x^2f^h(x_k^*)(\xkh-x_k^*).
		\end{align*}
		We can again use relation \eqref{taylor3} (applied to $f^H,T_{q,k}^H$ with constant $L_H$ and to $f,T_{2,k}^h$ with constant $L_2$), \eqref{stopping}, the fact that $R\nabla_x^2 f^h(x_k^*)P$ is still positive definite, and Assumption \ref{ass_R} together with relation $s_k^h=Ps_k^H$, to deduce that:
		\begin{align*}
		\lambda_k\|s_k^H\|^{q}&\leq (\theta+L_H)\|s_k^H\|^q+\kappa_RL_2\|x_k^*-\xkh\|^2
		+\|H_H(s_k^H)\|\\&+\kappa_R^2L_{2}\|x_k^*-\xkh\|\|s_k^H\|+\|R\nabla_x^2f^h(x_k^*)(\xkh-x_k^*)\|.
		\end{align*}
		We remark that 
		\begin{align*}
		(\theta+L_H)\|s_k^H\|^q+\kappa_RL_2\|x_k^*-\xkh\|^2
		+\|H_H(s_k^H)\|&+\kappa_R^2L_{2}\|x_k^*-\xkh\|\|s_k^H\|\\+\|R\nabla_x^2f^h(x_k^*)(\xkh-x_k^*)\|&\geq \kappa_RL_2\|x_k^*-\xkh\|^2+\kappa_RL_{2}\|x_k^*-\xkh\|\|s_k^H\|.
		\end{align*}
		As previously, we can solve the following inequality:
		\begin{align*}
		\lambda_k\|s_k^H\|^{2}&\leq \kappa_RL_2\|x_k^*-\xkh\|^2+\kappa_R^2L_{2}\|x_k^*-\xkh\|\|s_k^H\|,
		\end{align*}
		and conclude that:
		\begin{equation}\label{step_error_coarse}
		\|s_k^H\|\leq C_c \|\xkh-x_k^*\|, \quad C_c=\frac{\left[\kappa_RL_2+\sqrt{\kappa_R^2L_2^2+4L_2\kappa_R\lambda_k}\right]}{2\lambda_k}.
		\end{equation}
		We can then use the fact that $\lambda_k\leq\lambda_{\max}$ for all $k$ and that $\|\skh\|\leq\kappa_R\|s_k^H\|$  to conclude that in all cases it exists a constant $C$ such that $\|\skh\|\leq C \|\xkh-x_k^*\|$.
		\end{proof}

\end{document}